\documentclass[reqno,10pt]{amsart}

\usepackage{amsmath,amsthm,amssymb,mathtools,amscd}
\usepackage{latexsym,graphicx,color,indentfirst}
\usepackage[linesnumbered,boxed,norelsize]{algorithm2e}
\usepackage{fancyhdr}
\usepackage{mathrsfs}
\usepackage{enumitem}
\usepackage{appendix}

\makeatletter \@addtoreset{equation}{section} \makeatother
\setlength{\parindent}{1em}

\newtheorem{theorem}{Theorem}[section]
\newtheorem{lemma}[theorem]{Lemma}
\newtheorem{assumption}{Assumption}
\newtheorem{Rule}{Rule}

\newtheorem{proposition}[theorem]{Proposition}

\newtheorem{remark}[theorem]{Remark}
\newtheorem{example}[theorem]{Example}

\def\yd{y^\delta}

\def\p{\partial}
\def\d{\delta}
\def\ep{\varepsilon}
\def\l{\langle}
\def\r{\rangle}
\def\E{{\mathcal E}}
\def\R{\mathcal R}

\def\a{\alpha}
\def\la{\lambda}

\topmargin  10 pt
\begin{document}

\title[Dual gradient flow for ill-posed problems]{Dual gradient flow for solving linear ill-posed problems in Banach spaces}

\author{Qinian Jin}
\address{Mathematical Sciences Institute, Australian National
University, Canberra, ACT 2601, Australia}
\email{qinian.jin@anu.edu.au} \curraddr{}

\author{Wei Wang}
\address{College of Data Science, Jiaxing University, Jiaxing, Zhejiang Province, 314001,  China}
\email{weiwang@zjxu.edu.cn}

%\address{College of Mathematics, Physics and Information Engineering, Jiaxing University, Zhejiang 314001, People's Republic of China}
%\email{\tt weiwang_math@gmail.com}p

%}

%\keywords{Inverse problems,  Tikhonov regularization, global minimization, parameter choice rules}
%
%\subjclass[2010]{65J20, 47J06, 47A52, 49J40}
%

%\date{\today}

\begin{abstract}
We consider determining the $\R$-minimizing solution of ill-posed problem $A x = y$ for a bounded 
linear operator $A: X \to Y$ from a Banach space $X$ to a Hilbert space $Y$, where $\R: X \to (-\infty, \infty]$ 
is a strongly convex function. A dual gradient flow is proposed to approximate the sought solution by using noisy data.
Due to the ill-posedness of the underlying problem, the flow demonstrates the semi-convergence phenomenon and 
a stopping time should be chosen carefully to find reasonable approximate solutions. We consider the choice of a proper 
stopping time by various rules such as the {\it a priori} rules, the discrepancy principle, and the heuristic discrepancy 
principle and establish the respective convergence results. Furthermore, convergence rates are derived under the variational 
source conditions on the sought solution. Numerical results are reported to test the performance of the dual gradient flow. 
\end{abstract}

\maketitle

\section{\bf Introduction}

Consider the linear ill-posed problem
\begin{equation}\label{1.1}
Ax = y,
\end{equation}
where $A: X \to Y$ is a bounded linear operator from a Banach space $X$ to a Hilbert space $Y$. 
Throughout the paper we always assume (\ref{1.1}) has a solution. The equation (\ref{1.1}) may 
have many solutions. By taking into account {\it a priori} information about the sought solution, 
we may use a proper, lower semi-continuous, convex function $\R: X \to (-\infty, \infty]$ to select 
a solution $x^\dag$ of (\ref{1.1}) such that
\begin{align}\label{R_min}
\R(x^\dag) = \min\{\R(x): x \in X \mbox{ and } A x = y\}
\end{align}
which, if exists, is called a $\R$-minimizing solution of (\ref{1.1}). 

In practical applications, the exact data $y$ is in general not available, instead we only have a noisy data $y^\d$ 
satisfying 
\begin{align}\label{rbdgm.2}
\|y^\d - y\| \le \d, 
\end{align}
where $\d>0$ is the noise level. Due to the ill-posedness of the underlying problem, the solution of (\ref{R_min}) 
does not depend continuously on the data. How to use a noisy data $y^\d$ 
to stably reconstruct a $\R$-minimizing solution of (\ref{1.1}) is an important topic. 
To conquer the ill-posedness, many regularization methods have been developed to solve inverse problems; see \cite{bh12,BO2004,ehn96,FG2012,Jin2022,jw13,SLS2006,skhk12,zwt22,zwz22} and the references therein for instance. 

In this paper we will consider solving (\ref{R_min})  by a dual gradient flow. Throughout the paper we will assume that 
$\R$ is strongly convex. In case $\R$ is not strongly convex, we may consider its strong convex perturbation 
by adding it a small multiple of a strongly convex function; this does not affect much the reconstructed solution, 
see Proposition \ref{prop:A.1} in the appendix. The dual gradient flow for solving (\ref{R_min}) can be derived by 
applying the gradient flow to its dual problem. Since the Lagrangian function corresponding to 
(\ref{R_min}) with $y$ replaced by $y^\d$ is given by 
$$
L(x, \la) := \R(x) - \l \la, A x- y^\d\r, 
$$
we have the dual function 
$$
\inf_{x\in X} L(x, \la) = - \R^*(A^* \la) + \l \la, y^\d\r, \quad \forall \la \in Y,
$$
where $A^*: Y \to X^*$ denotes the adjoint of $A$ and $\R^*: X^* \to (-\infty, \infty]$ denotes the 
Legendre-Fenchel conjugate of $\R$. Thus the dual problem takes the form 
\begin{align}\label{DP}
\min_{\la\in Y} \left\{ d_{y^\d} (\la):= \R^*(A^*\la) - \l \la, y^\d\r \right\}. 
\end{align}
Since $\R$ is strongly convex, $\R^*$ is continuously differentiable over $X^*$ and its gradient $\nabla \R^*$ maps 
$X^*$ into $X$, see Proposition \ref{dgm.prop1}. Therefore, $\la \to d_{y^\d}(\la)$ is continuous differentiable with 
$\nabla d_{y^\d}(\la) = A \nabla \R^*(A^*\la) -y^\d$. Applying the gradient flow to solve (\ref{DP}) then gives 
$$
\frac{d}{dt} \la(t) = y^\d - A \nabla \R^*(A^* \la(t)), \quad t>0
$$
which can be equivalently written as 
\begin{equation}\label{asym0}
\begin{aligned}
x(t) & = \nabla \R^*(A^*\la(t)),\\
\frac{d}{dt} \la(t) & = y^\d-A x(t), \quad t>0
\end{aligned}
\end{equation}  
with a suitable initial value $\la(0)\in Y$. This is the dual gradient flow we will study 
for solving (\ref{R_min}). 

When $X$ is a Hilbert space and $\R(x) = \|x\|^2/2$, the dual gradient flow (\ref{asym0}) becomes the first 
order asymptotical regularization 
\begin{align}\label{AR}
\frac{d}{dt} x(t) = A^*(y^\d-Ax(t)),  \quad t>0
\end{align}
which is known as the Showalter's method. The analysis of (\ref{AR}) and its linear as well as nonlinear extensions 
in Hilbert spaces can be found in \cite{bdes21,lnw20,t94,zh19} for instance. Recently, higher order asymptotical 
regularization methods have also received attention for solving ill-posed problems in Hilbert spaces, 
see \cite{bdes21,zh19,zh20}. Due to the Hilbert space setting, the analysis of these asymptotical regularization 
methods can be performed by the powerful tool of spectral theory for bounded linear self-adjoint operators.  
Note that if the Euler scheme is used to discretize (\ref{AR}) one may obtain the linear Landweber iteration 
\begin{align}\label{Land0}
x_{n+1} = x_n - \gamma A^*(Ax_n-y^\d)
\end{align}
in Hilbert spaces with a suitable step-size $\gamma>0$. Therefore, (\ref{AR}) can be viewed as a continuous analogue 
of (\ref{Land0}) and the study of (\ref{AR}) can provide new insights about (\ref{Land0}). On the other hand,
by using other numerical schemes, we may produce from (\ref{AR}) iterative regularization methods far 
beyond the Landweber iteration. For instance, a class of order optimal iterative regularization methods have been proposed 
in \cite{r05} by discretizing (\ref{AR}) by the Runge-Kutta integrators; furthermore, by discretizing a second order 
asymptotical regularization method by a symplectic integrator -- the Str\"{o}rmer-Verlet method, a new order optimal 
iterative regularization method has been introduced in \cite{zh20} with acceleration effect. 

The dual gradient flow (\ref{asym0}) can be viewed as a continuous analogue of the Landweber iteration in Banach space 
\begin{equation}\label{dgm}
\begin{aligned}
 x_n & = \nabla \R^*(A^*\la_n), \\
 \la_{n+1} & = \la_n-\gamma(A x_n-y^\d)
\end{aligned}
\end{equation}
as (\ref{dgm}) can be derived from (\ref{asym0}) by applying the Euler discrete scheme. The method (\ref{dgm}) and 
its generalization to linear and nonlinear ill-posed problems in Banach spaces have been considered
in \cite{bh12,jw13,SLS2006} for instance. How to derive the convergence rates for (\ref{dgm}) has been a challenging question 
for a long time and it has been settled recently in \cite{Jin2022} by using some deep results from 
convex analysis in Banach spaces when the method is terminated by either an {\it a priori} stopping rule 
or the discrepancy principle. It should be mentioned that, by using other numerical integrators to 
discretize (\ref{asym0}) with respect to the time variable, one may produce new iterative regularization methods 
for solving (\ref{R_min}) in Banach sopaces that are different from (\ref{dgm}). 

In this paper we will analyze the convergence behavior of the dual gradient flow (\ref{asym0}). 
Due to the non-Hilbertian structure of the space $X$ and the non-quadraticity of the regularization 
functional $\R$, the tools for analyzing asymptotical regularization methods in Hilbert spaces are no 
longer applicable. The convergence analysis of (\ref{asym0}) is much more challenging and tools from 
convex analysis in Banach spaces should be exploited. Our analysis 
is inspired by the work in \cite{Jin2022}. We first prove some key properties on the dual 
gradient flow (\ref{asym0}) based on which we then consider its convergence behavior. Due to the 
propagation of noise along the flow, the primal trajectory $x(t)$ demonstrates the semi-convergence 
phenomenon: $x(t)$ approaches the sought solution at the beginning as the time $t$ increases; 
however, after a certain amount of time, the noise plays the dominated role and $x(t)$ begins 
to diverge from the sought solution. Therefore, in order to produce from $x(t)$ a reasonable 
approximate solution, the time $t$ should be chosen carefully. We consider several rules for 
choosing $t$. We first consider {\it a priori} parameter choice rules and establish convergence 
and convergence rate results. {\it A priori} rules can provide useful insights on the convergence 
property of the method. However, since it requires information on the unknown sought solution, 
the {\it a priori} parameter choice rules are of limited use in practice. To make the dual gradient 
flow (\ref{asym0}) more practical relevant, we then consider the choice of $t$ by {\it a posteriori} 
rules. When the noise level $\d$ is available, we consider choosing $t$ by the discrepancy principle 
and obtain the convergence and convergence rate results. In case the 
noise level information is not available or not reliable, the discrepancy principle may not be 
applicable; instead we consider the heuristic discrepancy principle which uses only the noisy data. 
Heuristic rules can not guarantee a convergence result in the worst case scenario according to the 
Bakushinskii's veto \cite{B1984}. However, under certain conditions on the noisy data, we can prove a convergence
result and derive some error estimates. 
Finally we provide numerical simulations to test the performance of the dual gradient flow.

\section{\bf Convergence analysis}

In this section we will analyze the dual gradient flow (\ref{asym0}). We will carry out the analysis
under the following conditions.

\begin{assumption}\label{dgm.ass1}
\begin{enumerate}[leftmargin = 0.8cm]
\item[\emph{(i)}] $X$ is a Banach space, $Y$ is a Hilbert space, and $A : X \to Y$ is a bounded linear operator;

\item[\emph{(ii)}] $\R : X\to (-\infty, \infty]$ is proper, lower semi-continuous, and strongly convex in the sense that there is a constant $c_0>0$ such that
$$
\R(\gamma x + (1-\gamma) x') + c_0 \gamma (1-\gamma) \|x-x'\|^2 \le \gamma \R(x) + (1-\gamma) \R(x')
$$
for all $x, x'\in \emph{dom}(\R)$ and $0\le \gamma \le 1$; moreover, each sublevel set of $\R$ is
weakly compact in $X$.

\item[\emph{(iii)}] The equation $Ax = y$ has a solution in $\emph{dom}(\R)$.
\end{enumerate}
\end{assumption}

For a proper convex function $\R: X \to (-\infty, \infty]$ we use $\p \R$ to denote its subdifferential, i.e.
$$
\p \R(x) := \{\xi \in X^*: \R(x')\ge \R(x) + \l \xi, x'-x\r, \, \forall x'\in X\}
$$
for all $x \in X$. It is known that $\p \R$ is a multi-valued monotone mapping.
Let $\mbox{dom}(\p \R) := \{x\in X: \p \R(x) \ne \emptyset\}$. If $\R$ is strongly convex as stated in
Assumption \ref{dgm.ass1} (ii), then
\begin{align}\label{sc1}
2 c_0 \|x'-x\|^2 \le \l \xi' -\xi, x'-x\r
\end{align}
for all $x', x \in \mbox{dom}(\p \R)$ with any $\xi'\in \p \R(x')$ and $\xi \in \p \R(x)$; moreover
$$
c_0 \|x'-x\|^2 \le D_{\R}^\xi (x', x)
$$
for all $x' \in X$, $x\in \mbox{dom}(\p \R)$ and $\xi \in \p \R(x)$, where
$$
D_\R^\xi(x', x) := \R(x') - \R(x) - \l \xi, x'-x\r
$$
denotes the Bregman distance induced by $\R$ at $x$ in the direction $\xi$.

For a proper, lower semi-continuous, convex function $\R: X \to (-\infty, \infty]$, its Legendre-Fenchel 
conjugate $\R^*: X^* \to (-\infty, \infty]$ is defined by
\begin{equation*}
\R^*(\xi):=\sup_{x\in X} \left\{\l\xi, x\r -\R(x)\right\}, \quad \xi\in X^*
\end{equation*}
which is also proper, lower semi-continuous, and convex and admits the duality property
\begin{equation}\label{FL1}
\xi\in \p \R(x) \Longleftrightarrow x\in \p \R^*(\xi) \Longleftrightarrow
\R(x) +\R^*(\xi) =\l \xi, x\r.
\end{equation}
If, in addition, $\R^*$ is strongly convex, then $\R^*$ has nice smoothness properties as stated in the
following result, see \cite[Corollary 3.5.11]{ZA02}.

\begin{proposition}\label{dgm.prop1}
Let $X$ be a Banach space and let $\R: X \to (-\infty, \infty]$ be a proper,
lower semi-continuous, strongly convex function as stated in Assumption \ref{dgm.ass1} (ii).
Then $\emph{dom}(\R^*) = X^*$, $\R^*$ is Fr\'{e}chet differentiable and its gradient
$\nabla \R^*$ maps $X^*$ into $X$ with the property
$$
\|\nabla \R^*(\xi') -\nabla \R^*(\xi) \| \le \frac{\|\xi'-\xi\|}{2c_0}
$$
for all $\xi', \xi \in X^*$.
\end{proposition}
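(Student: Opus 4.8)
The plan is to realize $\nabla\R^*(\xi)$ as the unique maximizer in the definition of $\R^*(\xi)$ and to read off all three assertions directly from the strong convexity of $\R$; this is essentially a direct argument behind the cited \cite[Corollary 3.5.11]{ZA02}. First I would establish that $\mbox{dom}(\R^*) = X^*$. Fix any $x_0$ in the (nonempty) set $\mbox{dom}(\p\R)$ and pick $\xi_0 \in \p\R(x_0)$; the Bregman bound $c_0\|x-x_0\|^2 \le D_\R^{\xi_0}(x,x_0)$ supplies the quadratic minorant
$$
\R(x) \ge \R(x_0) + \l \xi_0, x - x_0\r + c_0\|x-x_0\|^2, \qquad x \in X.
$$
Substituting this into $\R^*(\xi) = \sup_x\{\l\xi,x\r - \R(x)\}$ and maximizing the resulting concave quadratic in $\|x-x_0\|$ gives the explicit estimate $\R^*(\xi) \le \l\xi,x_0\r - \R(x_0) + \|\xi-\xi_0\|^2/(4c_0) < \infty$ for every $\xi \in X^*$, so $\R^*$ is finite on all of $X^*$.

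Next I would show that for each $\xi \in X^*$ the function $x \mapsto \R(x) - \l\xi,x\r$ attains its minimum at a unique point $x_\xi \in X$. Coercivity follows from the same quadratic minorant, so each sublevel set $S_c := \{x : \R(x) - \l\xi,x\r \le c\}$ is bounded; being bounded it is contained in a sublevel set of $\R$, which is weakly compact by Assumption \ref{dgm.ass1}~(ii), and it is weakly closed by the weak lower semicontinuity of the lsc convex function $x \mapsto \R(x) - \l\xi,x\r$. Hence $S_c$ is weakly compact for $c$ slightly above the (finite) infimum, and a weakly lsc function attains its infimum on it, giving existence; strong convexity gives uniqueness. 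By construction $\R^*(\xi) = \l\xi,x_\xi\r - \R(x_\xi)$, i.e. $\xi \in \p\R(x_\xi)$, equivalently $x_\xi \in \p\R^*(\xi)$ by the duality (\ref{FL1}). I then set $\nabla\R^*(\xi) := x_\xi \in X$.

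The Lipschitz estimate is then immediate from the strong monotonicity (\ref{sc1}): since $\xi \in \p\R(x_\xi)$ and $\xi' \in \p\R(x_{\xi'})$,
$$
2c_0 \|x_{\xi'} - x_\xi\|^2 \le \l \xi' - \xi, x_{\xi'} - x_\xi\r \le \|\xi'-\xi\|\,\|x_{\xi'} - x_\xi\|,
$$
whence $\|x_{\xi'} - x_\xi\| \le \|\xi'-\xi\|/(2c_0)$, which in particular re-proves single-valuedness. For Fr\'echet differentiability I would sandwich the first-order remainder: the subgradient inequality for $\R^*$ at $\xi$ and at $\xi+h$ together yield
$$
0 \le \R^*(\xi+h) - \R^*(\xi) - \l h, x_\xi\r \le \l h, x_{\xi+h} - x_\xi\r \le \frac{\|h\|^2}{2c_0},
$$
so the remainder is $o(\|h\|)$ and $\nabla\R^*(\xi) = x_\xi$ is the Fr\'echet derivative, as an element of $X$.

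I expect the main obstacle to be the attainment step: one must combine the coercivity coming from strong convexity with the weak compactness of the sublevel sets of $\R$ and with weak lower semicontinuity to guarantee that the maximizer exists and, crucially, lives in $X$ rather than merely in the bidual $X^{**}$ — this is precisely what makes $\nabla\R^*$ an $X$-valued map and justifies the gradient flow formulation (\ref{asym0}). Once this maximizer is secured, the Lipschitz bound and the Fr\'echet differentiability follow as short consequences of (\ref{sc1}) and the Fenchel--Young identity.
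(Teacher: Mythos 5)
Your proof is correct, and it is worth noting that the paper itself gives no proof of this proposition at all: it simply cites \cite[Corollary 3.5.11]{ZA02}, which obtains the statement from the general duality between uniform convexity of $\R$ and uniform smoothness of $\R^*$. Your argument is a self-contained, elementary substitute, and every step checks out: the quadratic minorant gives $\emph{dom}(\R^*)=X^*$, the Fenchel--Young equality at the minimizer $x_\xi$ gives $\xi\in\p\R(x_\xi)$ and hence $x_\xi\in\p\R^*(\xi)$ via (\ref{FL1}), the strong monotonicity (\ref{sc1}) gives the Lipschitz bound, and your two-sided subgradient sandwich gives a remainder that is $O(\|h\|^2)$, hence Fr\'{e}chet differentiability with derivative $x_\xi\in X$. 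Two small comments. First, you assert without justification that $\emph{dom}(\p\R)\ne\emptyset$; under Assumption \ref{dgm.ass1}~(ii) this is a one-liner (the sublevel sets of $\R$ are weakly compact and $\R$ is weakly lower semicontinuous, so $\R$ attains its minimum at some $x_0$, where $0\in\p\R(x_0)$), or one can invoke Br{\o}ndsted--Rockafellar, but the line should be there. Second, your attainment step leans on the weak compactness of the sublevel sets of $\R$, which is legitimate under Assumption \ref{dgm.ass1}~(ii) but is actually unnecessary, and this is where your route differs from the cited corollary, which assumes no compactness: by the midpoint form of strong convexity, any minimizing sequence $(x_n)$ of $x\mapsto\R(x)-\l\xi,x\r$ satisfies $\frac{c_0}{4}\|x_n-x_m\|^2\le\frac{1}{2}f_\xi(x_n)+\frac{1}{2}f_\xi(x_m)-\inf f_\xi\to 0$, so it is Cauchy and converges strongly in the Banach space $X$, and lower semicontinuity identifies the limit as the minimizer. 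This Cauchy argument both removes the compactness hypothesis and resolves, more directly than your weak-compactness route, the bidual concern you raise at the end, since the minimizer is produced as a strong limit in $X$ from the outset. So: your proof is valid as written for this paper's setting; the cited result (and the Cauchy-sequence variant) buys the same conclusion in full generality.
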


Let Assumption \ref{dgm.ass1} hold. It is easy to show that (\ref{1.1}) has a unique $\R$-minimizing solution, 
which is denoted as $x^\dag$. We now consider the dual gradient flow (\ref{asym0}) to find approximation of $x^\dag$. 
By submitting $x(t) = \nabla \R^*(A^*\la(t))$ into the differential equation in (\ref{asym0}), we can see that
\begin{align}\label{ODE}
\frac{d}{dt} \la(t) = \Phi(\la(t)),
\end{align}
where
\begin{align}\label{Phi}
\Phi(\la) := y^\d - A \nabla \R^*(A^* \la)
\end{align}
for all $\la\in Y$. According to Proposition \ref{dgm.prop1}
we have
\begin{align}\label{8.3}
\|\Phi(\la') - \Phi(\la) \| & =\|A \nabla \R^*(A^* \la')- A \nabla \R^*(A^* \la) \| \nonumber \\
& \le \|A\| \|\nabla \R^*(A^* \la') -\nabla \R^* (A^*\la)\| \nonumber \\
& \le \frac{\|A\|}{2 c_0} \|A^* \la'-A^* \la\| \nonumber \\
& \le L \|\la'-\la\|
\end{align}
for all $\la', \la \in Y$, where $L := \|A\|^2/(2 c_0)$, i.e. $\Phi$ is globally Lipschitz continuous on $Y$.
Therefore, by the classical Cauchy-Lipschitz-Picard theorem, see \cite[Theorem 7.3]{B2011} for instance,
the differential equation (\ref{ODE}) with any initial value $\la(0)$ has a unique solution
$\la(t) \in C^1([0, \infty), Y)$. Defining $x(t) := \nabla \R^*(A^*\la(t))$ then shows that the dual
gradient flow (\ref{asym0}) with any initial value $\la(0)\in Y$ has a unique solution 
$(x(t), \la(t)) \in C([0, \infty), X) \times C^1([0, \infty), Y)$.

\subsection{Key properties of the dual gradient flow}

We will use the function $x(t)$ defined by the dual gradient flow (\ref{asym0}) with $\la(0)=0$ to approximate the
unique $\R$-minimizing solution $x^\dag$ of (\ref{1.1}) and consider the approximation property. 
Due to the appearance of noise in the data, $x(t)$ demonstrates the semi-convergence property, 
i.e. $x(t)$ tends to the sought solution at the beginning as the time $t$ increases, and after 
a certain amount of time, $x(t)$ diverges and the approximation property is deteriorated as $t$
continually increases. Therefore, it is necessary to determine a proper time at which the value 
of $x$ is used as an approximate solution. To this purpose, we first prove the
monotone property of the residual $\|A x(t) - y^\d\|$ which is crucial for designing 
parameter choice rules.

\begin{lemma}\label{lem.01}
Let Assumption \ref{dgm.ass1} hold. Then along the dual gradient flow (\ref{asym0}) the function $t \to \|A x(t) - y^\d\|$ is monotonically decreasing on $[0, \infty)$.
\end{lemma}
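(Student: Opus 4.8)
The plan is to reduce the claim to the almost-everywhere sign condition $\frac{d}{dt}\|A x(t) - y^\d\|^2 \le 0$ and then integrate. Write $r(t) := A x(t) - y^\d$ and $\phi(t) := \|r(t)\|^2$, so that the flow (\ref{asym0}) reads $\dot\la(t) = -r(t)$. Since $\nabla\R^*$ is Lipschitz by Proposition \ref{dgm.prop1}, $A^*$ is bounded, and $\la \in C^1$, the map $x(t) = \nabla\R^*(A^*\la(t))$ is locally Lipschitz; hence $r$ is locally Lipschitz, $\phi$ is absolutely continuous on compact intervals and (as $Y$ is a Hilbert space) differentiable almost everywhere. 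It therefore suffices to prove $\phi'(t) \le 0$ for a.e.\ $t$, since integrating this along an absolutely continuous $\phi$ yields $\phi(t) \le \phi(s)$ for all $0 \le s < t$, which is exactly the asserted monotonicity.

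The engine of the argument is the monotonicity of $\nabla\R^*$. Because $\R^*$ is convex and Fr\'echet differentiable, its gradient is a monotone mapping, i.e. $\langle \xi' - \xi, \nabla\R^*(\xi') - \nabla\R^*(\xi)\rangle \ge 0$ for all $\xi', \xi \in X^*$, the pairing being the duality pairing of $X^* \times X$. I would apply this with $\xi = A^*\la(t)$ and $\xi' = A^*\la(t+h)$, so that $\nabla\R^*(\xi) = x(t)$ and $\nabla\R^*(\xi') = x(t+h)$. Using the flow identity $\la(t+h) - \la(t) = -\int_t^{t+h} r(\tau)\,d\tau$, the adjoint relation $\langle A^* u, v\rangle = \langle u, A v\rangle$, and $A(x(t+h) - x(t)) = r(t+h) - r(t)$, the monotonicity inequality collapses to
$$\left\langle \int_t^{t+h} r(\tau)\,d\tau,\; r(t+h) - r(t)\right\rangle \le 0$$
valid for all $t, t+h \ge 0$.

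Finally I would divide this inequality by $h^2 > 0$ and let $h \to 0$. At any $t$ where $r$ is differentiable, continuity of $r$ gives $h^{-1}\int_t^{t+h} r(\tau)\,d\tau \to r(t)$, while $h^{-1}(r(t+h) - r(t)) \to \dot r(t)$, so the limit yields $\langle r(t), \dot r(t)\rangle \le 0$, that is $\phi'(t) = 2\langle r(t), \dot r(t)\rangle \le 0$. As this holds at almost every $t$ and $\phi$ is absolutely continuous, $\phi$ — and hence $\|A x(t) - y^\d\|$ — is monotonically decreasing on $[0, \infty)$.

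I expect the only genuine obstacle to be the limited smoothness of $\nabla\R^*$: since $\R^*$ is merely Fr\'echet differentiable with Lipschitz gradient and need not be twice differentiable, $x(t)$ need not be $C^1$, so a naive chain-rule evaluation of $\frac{d}{dt}\|r(t)\|^2$ is not justified. The difference-quotient formulation above bypasses this by using only monotonicity of $\nabla\R^*$ together with the almost-everywhere differentiability supplied by local Lipschitz continuity; the point to verify carefully is precisely that local Lipschitzness of $r$ secures absolute continuity of $\phi$, so that the pointwise a.e.\ sign condition does integrate to global monotonicity.
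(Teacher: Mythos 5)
Your proof is correct, but it takes a genuinely different route from the paper. The paper proves the lemma by Euler discretization: it defines discrete iterates on $[t_0,t_1]$ with step $h_l$, uses the \emph{strong} convexity inequality $2c_0\|x_{k+1}-x_k\|^2 \le \langle A^*\la_{k+1}-A^*\la_k,\, x_{k+1}-x_k\rangle$ together with the polarization identity to get the discrete monotonicity $\|Ax_{k+1}^{(l)}-y^\d\| \le \|Ax_k^{(l)}-y^\d\|$ under the step-size restriction $h_l\|A\|^2 \le 4c_0$, and then passes to the continuum limit via a Gronwall estimate showing the Euler polygon converges to $\la(t)$. You instead work directly in continuous time: plain monotonicity of $\nabla\R^*$ (mere convexity of $\R^*$ suffices here, since the quadratic remainder $\|r(t+h)-r(t)\|^2 = O(h^2)$ vanishes after dividing by $h^2$, whereas the discrete argument genuinely needs the strong-convexity modulus to absorb $\|A(x_{k+1}^{(l)}-x_k^{(l)})\|^2$), combined with the flow identity, yields $\langle r(t),\dot r(t)\rangle \le 0$ at points of differentiability, and absolute continuity of $\phi=\|r\|^2$ integrates this to global monotonicity. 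Your argument is shorter and dispenses with the step-size bookkeeping and the Gronwall approximation lemma; what the paper's version buys is complete elementarity (no vector-valued a.e.\ differentiation) plus a by-product of independent interest, namely monotonicity of the residual for the Landweber-type iteration (2.9) and a convergence certificate for its Euler discretization. The one step in your write-up that deserves a precise justification is the a.e.\ differentiability of the locally Lipschitz curve $r:[0,\infty)\to Y$: for real-valued $\phi$ this is classical, but for $\dot r(t)$ to exist a.e.\ you must invoke the Radon--Nikodym property of the Hilbert space $Y$ (Komura-type differentiability of Lipschitz curves into reflexive spaces), so cite that fact explicitly; alternatively, you can avoid it entirely by noting that your difference-quotient inequality already gives $\phi(t+h)-\phi(t) = 2\langle r(t), r(t+h)-r(t)\rangle + \|r(t+h)-r(t)\|^2 \le O(h^2)$ for every $t$, so the upper right Dini derivative of the continuous function $\phi$ is nonpositive everywhere, which suffices for monotonicity without any a.e.\ differentiation of $r$.
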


\begin{proof}
Let $0\le t_0 <t_1 <\infty$ be any two fixed numbers. We need to show
\begin{align}\label{8.1}
\|A x(t_1) - y^\d \| \le \|A x(t_0)-y^\d\|.
\end{align}
We achieve this by discretizing (\ref{asym0}) by the Euler method, showing the monotonicity holds
for the discrete method, and then using the approximation property of the discretization.

To this end, for any fixed integer $l\ge 1$ we set $h_l: = (t_1-t_0)/l$ and then define $\{x_k^{(l)}, \la_k^{(l)}\}_{k=0}^\infty$ by
\begin{align*}
x_k^{(l)}  = \nabla \R^*(A^* \la_k^{(l)}), \qquad
\la_{k+1}^{(l)}  = \la_k^{(l)} + h_l (y^\d - A x_k^{(l)})
\end{align*}
for $k = 0, 1, \cdots$, where $\la_0^{(l)} = \la(t_0)$ and hence $x_0^{(l)} = x(t_0)$.
By the definition of $x_k^{(l)}$ and (\ref{FL1}) we have $A^* \la_k^{(l)} \in \p \R(x_k^{(l)})$.
Since $\R$ is strongly convexity, we may use (\ref{sc1}) to obtain
\begin{align*}
2 c_0 \|x_{k+1}^{(l)} - x_k^{(l)}\|^2
& \le \left\l A^* \la_{k+1}^{(l)} - A^* \la_k^{(l)}, x_{k+1}^{(l)} - x_k^{(l)}\right\r \\
& = \left\l \la_{k+1}^{(l)} - \la_k^{(l)}, A(x_{k+1}^{(l)} - x_k^{(l)})\right\r.
\end{align*}
By using the definition of $\la_{k+1}^{(l)}$, we further have
\begin{align*}
2 c_0 \|x_{k+1}^{(l)} & - x_k^{(l)}\|^2
\le h_l \left\l y^\d - A x_k^{(l)}, A(x_{k+1}^{(l)} - x_k^{(l)})\right\r\\
& = \frac{h_l}{2} \left(\|A x_k^{(l)} - y^\d\|^2 - \|A x_{k+1}^{(l)} - y^\d\|^2 + \|A(x_{k+1}^{(l)} - x_k^{(l)})\|^2\right) \\
& \le \frac{h_l}{2} \left(\|A x_k^{(l)} - y^\d\|^2 - \|A x_{k+1}^{(l)} - y^\d\|^2\right)
+ \frac{h_l \|A\|^2}{2} \|x_{k+1}^{(l)} - x_k^{(l)}\|^2.
\end{align*}
By taking $l$ to be sufficiently large so that $h_l \|A\|^2 \le 4 c_0$, then we can obtain
\begin{align}\label{8.2}
\|A x_{k+1}^{(l)} - y^\d \| \le \|A x_k^{(l)} - y^\d\|, \quad k =0, 1, \cdots.
\end{align}
In particular, this implies that
$$
\|A x_l^{(l)} - y^\d\| \le \|A x_0^{(l)} - y^\d\| = \|A x(t_0) - y^\d\|.
$$
If we are able to show that $\|x_l^{(l)} - x(t_1)\| \to 0$ as $l \to \infty$, by taking $l \to \infty$
in the above inequality we can obtain (\ref{8.1}) immediately.

It therefore remains only to show $\|x_l^{(l)} - x(t_1)\| \to 0$ as $l \to \infty$. The argument is standard;
we include it here for completeness. Let $s_i = t_0 + i h_l$ for $i = 0, \cdots, l$. Using $\{x_k^{(l)}, \la_k^{(l)}\}$ we define $\la_l(t)$ for $t \in [t_0, t_1]$ as follows
\begin{align}\label{8.41}
\la_l(t) = \la_k^{(l)} + (t-s_k) (y^\d - A x_k^{(l)}), \quad \forall t \in [s_k, s_{k+1}].
\end{align}
Since $x_k^{(l)} = \nabla \R^*(A^* \la_k^{(l)})$, we have
$$
\la_l(t) = \la_k^{(l)} + (t-s_k) \Phi(\la_k^{(l)}), \quad \forall t \in [s_k, s_{k+1}],
$$
where $\Phi$ is defined by (\ref{Phi}). From the definition of $\la_l(t)$, it is easy to see
that $\la_l(s_k) = \la_k^{(l)}$. Furthermore,
for $t \in [t_0, t_1]$ we can find $0\le k \le l-1$ such that $t \in [s_k, s_{k+1}]$ and consequently
\begin{align*}
\la_l(t) & = \la_0^{(l)} + \sum_{i=0}^{k-1} (s_{i+1}-s_i) \Phi(\la_i^{(l)}) + (t-s_k) \Phi(\la_k^{(l)}) \\
& = \la(t_0) + \int_{s_0}^t \Phi(\la_l(s)) ds + \Delta_l(t),
\end{align*}
where
\begin{align*}
\Delta_l (t): = \sum_{i=0}^{k-1} \int_{s_i}^{s_{i+1}} \left[\Phi(\la_i^{(l)}) - \Phi(\la_l(s))\right] ds
+ \int_{s_k}^t \left[\Phi(\la_k^{(l)}) - \Phi(\la_l(s))\right] ds.
\end{align*}
Note that $\la(t) = \la(t_0) + \int_{t_0}^t \Phi(\la(s)) ds$. Therefore
\begin{align*}
\la_l(t) -\la(t)
& = \int_{s_0}^t \left[\Phi(\la_l(s)) - \Phi(\la(s))\right] ds + \Delta_l(t).
\end{align*}
Taking the norm on the both sides and using (\ref{8.3}) it follows that
\begin{align*}
\|\la_l(t) - \la(t)\| & \le L \int_{s_0}^t \|\la_l(s) - \la(s)\| ds + \|\Delta_l(t)\|.
\end{align*}
By using (\ref{8.3}) and (\ref{8.41}) we have
\begin{align*}
\|\Delta_l(t)\| & \le \sum_{i=0}^{k-1} L \int_{s_i}^{s_{i+1}} \|\la_i^{(l)} - \la_l(s)\| ds
+ L \int_{s_k}^t \|\la_k^{(l)} - \la_l(s)\| ds\\
& = \sum_{i=0}^{k-1} L \int_{s_i}^{s_{i+1}} (s-s_i)\| A x_i^{(l)} - y^\d\| ds
+ L \int_{s_k}^t (s-s_k) \|A x_k^{(l)} - y^\d\| ds.
\end{align*}
By using (\ref{8.2}), $s_0 = t_0$ and $t \in [s_k, s_{k+1}]$ with $0\le k \le l-1$ we thus obtain
\begin{align*}
\|\Delta_l(t)\| \le \frac{1}{2} L \|A x_0^{(l)} - y^\d\| \left( k h_l^2 + (t-s_k)^2\right) \le  M h_l,
\end{align*}
where $M := \frac{1}{2} L \|A x(t_0)- y^\d\| (t_1 -t_0)$. Therefore
\begin{align*}
\|\la_l(t) - \la(t)\| \le L \int_{t_0}^t \|\la_l(s) - \la(s)\| ds + M h_l
\end{align*}
for all $t \in [t_0, t_1]$. From the Gronwall inequality it then follows that
$$
\|\la_l(t) - \la(t)\| \le M h_l e^{L(t-t_0)}, \quad \forall t \in [t_0, t_1].
$$
Since $h_l \to 0$ as $l \to \infty$ and $\la_l(t_1) = \la_l(s_l) = \la_l^{(l)}$, we thus obtain from
the above equation that $\|\la_l^{(l)} - \la(t_1)\| \to 0$ as $l \to \infty$.
Since $x_l^{(l)} = \nabla \R^*(A^*\la_l^{(l)})$ and $x(t_1) = \nabla \R^*(A^*\la(t_1))$, by using the
continuity of $\nabla \R^*$ we can conclude $\|x_l^{(l)} - x(t_1)\| \to 0$ as $l \to \infty$. The proof
is therefore complete.
\end{proof}

Based on the monotonicity of $\|A x(t) - y^\d\|$ given in Lemma \ref{lem.01}, we can now prove
the following result which is crucial for the convergence analysis of the dual gradient flow (\ref{asym0}).

\begin{proposition}\label{prop2.0}
Let Assumption \ref{dgm.ass1} hold. Consider the dual gradient flow (\ref{asym0}) with $\la(0)=0$.
Then for any $\mu\in Y$ and $t>0$ there holds
\begin{equation}\label{Th0}
\frac{t}{2}\|Ax(t) -y^\d\|^2 +\frac{1}{2t}\left(\|\la(t)-\mu\|^2-\|\mu\|^2\right)
\le d_{\yd}(\mu) - d_{\yd}(\la(t)),
\end{equation}
where $d_{y^\d}(\mu) = \R^*(A^* \mu) - \l \mu, y^\d\r$ for any $\mu \in Y$.
\end{proposition}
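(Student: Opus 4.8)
The plan is to recognize the dual gradient flow as the gradient descent flow for the convex dual objective $d_{\yd}$ and to analyze it via a time-weighted energy (Lyapunov) functional. Since $\nabla d_{\yd}(\la) = A\nabla\R^*(A^*\la) - \yd = Ax(t) - \yd$ along the flow, the second equation in (\ref{asym0}) reads $\frac{d}{dt}\la(t) = -\nabla d_{\yd}(\la(t))$. Writing $g(t) := d_{\yd}(\la(t))$ and $r(t) := \|Ax(t) - \yd\|$, and using that $\la\in C^1$ together with the Fr\'{e}chet differentiability of $\R^*$ from Proposition \ref{dgm.prop1}, I would first record three elementary facts: the chain rule gives $g'(t) = \l \nabla d_{\yd}(\la(t)), \dot\la(t)\r = -r(t)^2$; similarly $\frac{1}{2}\frac{d}{dt}\|\la(t) - \mu\|^2 = \l \la(t) - \mu, \dot\la(t)\r = -\l \la(t)-\mu, \nabla d_{\yd}(\la(t))\r$; and the convexity of $d_{\yd}$ yields $\l \nabla d_{\yd}(\la(t)), \la(t)-\mu\r \ge d_{\yd}(\la(t)) - d_{\yd}(\mu) = g(t) - d_{\yd}(\mu)$.

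Next I would introduce the energy functional
\begin{align*}
\phi(t) := t\big(g(t) - d_{\yd}(\mu)\big) + \frac{1}{2}\|\la(t) - \mu\|^2
\end{align*}
and differentiate it. Substituting $g'(t) = -r(t)^2$ and bounding the term $\frac{1}{2}\frac{d}{dt}\|\la(t)-\mu\|^2$ by the convexity estimate above gives $\phi'(t) = (g(t) - d_{\yd}(\mu)) + t g'(t) + \frac{1}{2}\frac{d}{dt}\|\la(t)-\mu\|^2 \le (g(t) - d_{\yd}(\mu)) - t r(t)^2 - (g(t) - d_{\yd}(\mu)) = -t r(t)^2$, so the convexity of $d_{\yd}$ is exactly what makes the two first-order terms cancel. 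Integrating this differential inequality from $0$ to $t$ and using $\la(0) = 0$, so that $\phi(0) = \frac{1}{2}\|\mu\|^2$, produces $\phi(t) - \frac{1}{2}\|\mu\|^2 \le -\int_0^t s\, r(s)^2\, ds$.

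The final and crucial step is to extract the clean term $\frac{t}{2}r(t)^2$ from the integral on the right, and this is where the monotonicity of the residual supplied by Lemma \ref{lem.01} is indispensable. Since $r(s) = \|Ax(s) - \yd\|$ is nonincreasing, we have $r(s) \ge r(t)$ for all $s \in [0,t]$, whence $\int_0^t s\, r(s)^2\, ds \ge r(t)^2 \int_0^t s\, ds = \frac{t^2}{2} r(t)^2$. Substituting this bound and dividing by $t>0$ turns the estimate into $g(t) - d_{\yd}(\mu) + \frac{1}{2t}(\|\la(t)-\mu\|^2 - \|\mu\|^2) \le -\frac{t}{2}r(t)^2$, which upon rearrangement is precisely (\ref{Th0}). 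I expect the only genuine obstacle to be this last passage: without the monotonicity of $t\mapsto \|Ax(t) - \yd\|$ one would be stuck with the integral $\int_0^t s\,\|Ax(s)-\yd\|^2\,ds$ and could not localize it to the single time $t$, so Lemma \ref{lem.01} is doing the essential work here, while the remaining manipulations are routine once the energy $\phi$ is chosen correctly.
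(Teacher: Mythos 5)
Your proof is correct and takes essentially the same route as the paper, just packaged as a Lyapunov argument: differentiating $\phi(t)=t\bigl(d_{y^\d}(\la(t))-d_{y^\d}(\mu)\bigr)+\frac{1}{2}\|\la(t)-\mu\|^2$ and invoking convexity reproduces, in differential form, exactly the paper's two steps (the integration by parts of $-\int_0^t s\,\frac{d}{ds}\bigl[d_{y^\d}(\la(s))\bigr]\,ds$ and the integrated convexity inequality), with $\la(0)=0$ entering identically through $\phi(0)=\frac{1}{2}\|\mu\|^2$. You also use Lemma \ref{lem.01} at precisely the same point and in the same way as the paper, namely to bound $\int_0^t s\|Ax(s)-y^\d\|^2\,ds$ from below by $\frac{t^2}{2}\|Ax(t)-y^\d\|^2$.
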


\begin{proof}
According to the formulation of the dual gradient flow (\ref{asym0}), we have
\begin{align*}
\frac{d}{dt} \left[d_{y^\d}(\la(t))\right]
& = \left\l A \nabla \R^* (A^* \la(t)) - y^\d, \frac{d \la(t)}{dt}\right\r \\
& = - \|A x(t) - y^\d\|^2.
\end{align*}
Multiplying the both sides by $-t$ and then taking integration, we can obtain
\begin{align}\label{Th01}
\int_0^t s \|Ax(s) -y^\d\|^2 ds
& = - \int_0^t s \frac{d}{ds}\left[d_{y^\d}(\la(s))\right] ds \nonumber \\
& = -t d_{y^\d}(\la(t)) + \int_0^t d_{y^\d}(\la(s)) ds.
\end{align}
On the other hand, by the convexity of $d_{y^\d}$ we have
\begin{equation*}
d_{\yd}(\la) \le d_{\yd}(\mu) + \l \nabla d_{\yd}(\la), \la - \mu\r, \quad \forall \la, \mu \in Y.
\end{equation*}
Taking $\la = \la(t)$ and noting that $\frac{d \la(t)}{dt} = - \nabla d_{y^\d}(\la(t))$, we therefore have
\begin{equation*}
d_{\yd}(\la(t)) \le d_{\yd}(\mu) - \left\l \frac{d\lambda(t)}{dt}, \la(t) - \mu\right\r.
\end{equation*}
Integrating this equation then gives
\begin{align*}
\int_0^t  \left\l \frac{d\la(s)}{ds}, \la(s) - \mu\right\r ds
& \le \int_0^t \left(d_{\yd}(\mu) - d_{\yd}(\la(s))\right)ds \\
& = t d_{y^\d}(\mu) - \int_0^t d_{y^\d}(\la(s)) ds.
\end{align*}
By using $\lambda(0)=0$, we have
\begin{align*}
\int_0^t  \left\l \frac{d\la(s)}{ds}, \la(s) - \mu\right\r ds
& = \int_0^t \frac{d}{ds} \left(\frac{1}{2} \|\la(s) - \mu\|^2\right) ds \\
& = \frac{1}{2} \left(\|\la(t)-\mu\|^2 - \|\mu\|^2\right).
\end{align*}
Therefore
\begin{equation*}
\frac{1}{2}\left(\|\la(t)-\mu\|^2-\|\mu\|^2\right) \le t d_{\yd}(\mu) - \int_0^t d_{y^\d}(\la(s)) ds.
\end{equation*}
Adding this equation with \eqref{Th01} gives
\begin{align*}
\int_0^t s \|A x(s) - y^\d\|^2 ds + \frac{1}{2} \left(\|\la(t)-\mu\|^2 -\|\mu\|^2\right)
\le t \left(d_{y^\d}(\mu) - d_{\la^\d} (\la(t))\right).
\end{align*}
By virtue of the monotonicity of $t \to \|A x(t) - y^\d\|$ obtained in Lemma \ref{lem.01}, we have
$$
\int_0^t s \|A x(s)-y^\d\|^2 ds \ge \frac{t^2}{2} \|A x(t) - y^\d\|^2
$$
which, combining with the above equation, shows (\ref{Th0}).
\end{proof}

According to Proposition \ref{prop2.0}, we have the following consequences.

\begin{lemma}\label{lem:dgf2}
Let Assumption \ref{dgm.ass1} hold. Consider the dual gradient flow (\ref{asym0}) with $\la(0) =0$.
For any $\mu \in Y$ and $t>0$ there hold
\begin{align}
\frac{t}{2} \|A x(t) - y^\d\|^2
& \le d_y(\mu) - d_y(\la(t)) + \frac{\|\mu\|^2}{2t} + \frac{1}{2} t\d^2, \displaybreak[0] \label{eq:dgf3} \\
\frac{t}{2} \|A x(t)-y^\d\|^2 + \frac{1}{8t} \|\la(t)\|^2
&\le d_y(\mu) - d_y(\la(t)) + \frac{3 \|\mu\|^2}{4t} + t \d^2, \label{eq:dgf3.5}
\end{align}
where $d_y(\mu) = \R^*(A^*\mu) - \l \mu, y\r$ for any $\mu \in Y$.
\end{lemma}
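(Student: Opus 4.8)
The plan is to derive both inequalities directly from Proposition \ref{prop2.0} by translating the estimate from the noisy dual functional $d_{\yd}$ to the exact dual functional $d_y$ and then disposing of the resulting data-error cross term via Young's inequality. The only structural input is the elementary identity $d_{\yd}(\nu) = d_y(\nu) - \l \nu, \yd - y\r$ for every $\nu \in \Y$, which follows at once from the definitions $d_{\yd}(\nu) = \R^*(A^*\nu) - \l \nu, \yd\r$ and $d_y(\nu) = \R^*(A^*\nu) - \l \nu, y\r$. Applying it to $\nu = \mu$ and $\nu = \la(t)$ converts the right-hand side $d_{\yd}(\mu) - d_{\yd}(\la(t))$ of \eqref{Th0} into $d_y(\mu) - d_y(\la(t)) + \l \la(t) - \mu, \yd - y\r$, so that Proposition \ref{prop2.0} becomes the master inequality
$$
\frac{t}{2}\|Ax(t)-\yd\|^2 + \frac{1}{2t}\|\la(t)-\mu\|^2 \le d_y(\mu) - d_y(\la(t)) + \frac{\|\mu\|^2}{2t} + \l \la(t)-\mu, \yd-y\r .
$$

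For \eqref{eq:dgf3} I would bound the cross term by Cauchy--Schwarz followed by Young's inequality in the form $\l \la(t)-\mu, \yd-y\r \le \frac{1}{2t}\|\la(t)-\mu\|^2 + \frac{t}{2}\|\yd-y\|^2$, so that the entire $\frac{1}{2t}\|\la(t)-\mu\|^2$ term on the left is exactly cancelled; invoking $\|\yd-y\|\le \d$ then yields \eqref{eq:dgf3} immediately.

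For \eqref{eq:dgf3.5} I would instead split Young's inequality more conservatively as $\l \la(t)-\mu, \yd-y\r \le \frac{1}{4t}\|\la(t)-\mu\|^2 + t\|\yd-y\|^2$, which leaves a residual $\frac{1}{4t}\|\la(t)-\mu\|^2$ on the left after cancellation. To turn this into the advertised $\frac{1}{8t}\|\la(t)\|^2$ I would use the triangle-inequality bound $\|\la(t)\|^2 \le 2\|\la(t)-\mu\|^2 + 2\|\mu\|^2$, i.e. $\|\la(t)-\mu\|^2 \ge \tfrac12\|\la(t)\|^2 - \|\mu\|^2$, which produces $\frac{1}{8t}\|\la(t)\|^2$ together with a correction $-\frac{1}{4t}\|\mu\|^2$; absorbing this correction into the $\|\mu\|^2/t$ term upgrades its coefficient from $\tfrac12$ to $\tfrac34$, and $\|\yd-y\|\le\d$ supplies the $t\d^2$ term, giving \eqref{eq:dgf3.5}.

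I do not expect a genuine obstacle here: all the analytic content is already packaged in Proposition \ref{prop2.0}, and the remaining work is the deliberate choice of the Young weights --- full absorption of $\frac{1}{2t}\|\la(t)-\mu\|^2$ for the first estimate versus quarter absorption for the second --- so that the constants land precisely on $\frac{t}{2}\d^2$ and $\frac{\|\mu\|^2}{2t}$ in \eqref{eq:dgf3}, and on $\frac{1}{8t}\|\la(t)\|^2$, $\frac{3\|\mu\|^2}{4t}$ and $t\d^2$ in \eqref{eq:dgf3.5}. The only thing requiring care is the bookkeeping of these numerical constants.
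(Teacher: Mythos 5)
Your proposal is correct and coincides with the paper's own proof: the same translation identity $d_{\yd}(\nu)=d_y(\nu)-\l \nu, \yd-y\r$ applied to $\mu$ and $\la(t)$ in Proposition \ref{prop2.0}, the same Young splittings $\d\|\la(t)-\mu\|\le \frac{1}{2t}\|\la(t)-\mu\|^2+\frac{t}{2}\d^2$ for \eqref{eq:dgf3} and $\d\|\la(t)-\mu\|\le \frac{1}{4t}\|\la(t)-\mu\|^2+t\d^2$ for \eqref{eq:dgf3.5}, and the same bound $\|\la(t)\|^2\le 2(\|\la(t)-\mu\|^2+\|\mu\|^2)$ to convert the residual term, with all constants landing as advertised. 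No gap; nothing further is needed.
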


\begin{proof}
According to Proposition \ref{prop2.0} and the relation $d_{y^\d}(\mu) - d_{y^\d}(\la(t))
= d_y(\mu) - d_y(\la(t)) + \l \la(t) -\mu , y^\d - y\r$, we can obtain
\begin{align}\label{eq:dgf1}
\frac{t}{2} \|A x(t) - y^\d\|^2
&\le d_y(\mu) - d_y(\la(t)) - \frac{\|\la(t)-\mu\|^2}{2t} + \frac{\|\mu\|^2}{2 t} \nonumber \displaybreak[0]\\
& \quad \, + \l \la(t) - \mu, y^\d - y\r.
\end{align}
By the Cauchy-Schwarz inequality and $\|y^\d - y\| \le \d$ we have
\begin{align}\label{eq:dgf3.1}
\l \la(t) - \mu, y^\d - y\r \le \d \|\la(t) -\mu \| \le \frac{\|\la(t) -\mu\|^2}{2t} + \frac{1}{2} t \d^2.
\end{align}
Combining this with (\ref{eq:dgf1}) shows (\ref{eq:dgf3}). In order to establish (\ref{eq:dgf3.5}),
slightly different from (\ref{eq:dgf3.1}) we now estimate the term $\l\la(t) - \mu, y^\d - y\r$ as
$$
\l \la(t) - \mu, y^\d - y\r \le \d \|\la(t) - \mu\| \le \frac{\|\la(t) -\mu\|^2}{4t} + t\d^2.
$$
It then follows from (\ref{eq:dgf1}) that
\begin{align*}
\frac{t}{2} \|A x(t) - y^\d\|^2 + \frac{\|\la(t)-\mu\|^2}{4t}
 \le d_y(\mu) - d_y(\la(t)) + \frac{\|\mu\|^2}{2 t} + t\d^2
\end{align*}
which together with the inequality
$
\|\la(t)\|^2 \le 2(\|\la(t) -\mu\|^2 + \|\mu\|^2)
$
then shows (\ref{eq:dgf3.5}).
\end{proof}

Next we will provide a consequence of Lemma \ref{lem:dgf2} which will be useful for deriving convergence rates
when the sought solution satisfies variational source conditons to be specified. To this end, we need the
following Fenchel-Rockafellar duality formula.

\begin{proposition}\label{prop1}
Let $X$ and $Y$ be Banach spaces, let $f: X \to (-\infty,\infty]$ and
$g: Y\to (-\infty,\infty]$ be proper convex functions, and let $A:X \to Y$
be a bounded linear operator. If there is $x_0\in \emph{dom}(f)$ such that
$Ax_0\in \emph{dom}(g)$ and $g$ is continuous at $Ax_0$, then
$$
\inf_{x \in X} \{f(x) + g(Ax)\} = \sup_{\eta\in Y^*}\{-f^*(A^*\eta) - g^*(-\eta)\}.
$$
\end{proposition}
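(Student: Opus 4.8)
The plan is to prove the two inequalities separately: the easy ``weak duality'' inequality $\inf \ge \sup$, which needs no hypothesis on $g$, and the harder ``strong duality'' inequality $\inf \le \sup$, which is exactly where the continuity of $g$ at $Ax_0$ is used through a subdifferentiability argument for a perturbation function.

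First I would dispose of weak duality. For any $x\in X$ and any $\eta\in Y^*$, the Young--Fenchel inequality gives $f(x)\ge \l A^*\eta, x\r - f^*(A^*\eta)$ and $g(Ax) \ge \l -\eta, Ax\r - g^*(-\eta)$. Adding these two and using the identity $\l A^*\eta, x\r = \l \eta, Ax\r$ cancels the linear terms and leaves $f(x) + g(Ax) \ge -f^*(A^*\eta) - g^*(-\eta)$. Taking the infimum over $x\in X$ on the left and the supremum over $\eta\in Y^*$ on the right yields $\inf_{x}\{f(x)+g(Ax)\} \ge \sup_{\eta}\{-f^*(A^*\eta)-g^*(-\eta)\}$.

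For the reverse inequality, set $v:=\inf_{x\in X}\{f(x)+g(Ax)\}$. Since $x_0\in \mbox{dom}(f)$ and $Ax_0\in \mbox{dom}(g)$ we have $v\le f(x_0)+g(Ax_0)<\infty$. If $v=-\infty$, weak duality forces the dual supremum to be $-\infty$ as well and equality is immediate, so I may assume $v$ finite. The key device is the perturbation function $p(y):=\inf_{x\in X}\{f(x)+g(Ax+y)\}$ for $y\in Y$. As the infimum over $x$ of the jointly convex map $(x,y)\mapsto f(x)+g(Ax+y)$ (a convex function composed with the linear map $(x,y)\mapsto Ax+y$, plus a convex function of $x$), the function $p$ is convex, and $p(0)=v$. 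The continuity of $g$ at $Ax_0$ provides a neighborhood $V$ of $Ax_0$ and a constant $C$ with $g\le C$ on $V$; hence for all $y$ in a suitable neighborhood of $0$ we have $Ax_0+y\in V$ and thus $p(y)\le f(x_0)+g(Ax_0+y)\le f(x_0)+C$. So $p$ is convex and bounded above near $0$, whence it is continuous at $0$ and therefore subdifferentiable there. Choosing $\eta^*\in Y^*$ with $-\eta^*\in \partial p(0)$ gives $p(y)\ge p(0)-\l \eta^*, y\r$ for all $y$. Substituting $y=z-Ax$ for arbitrary $x\in X$, $z\in Y$ and using $p(z-Ax)\le f(x)+g(z)$ produces $[f(x)-\l A^*\eta^*, x\r]+[g(z)+\l \eta^*, z\r]\ge v$; taking the infimum over $x$ and over $z$ independently gives $-f^*(A^*\eta^*)-g^*(-\eta^*)\ge v$, which with weak duality shows the supremum is attained at $\eta^*$ and equals $v$.

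I expect the main obstacle to be the justification that $p$ is subdifferentiable at $0$, since this is precisely the point where the qualification hypothesis enters. It rests on two classical facts from convex analysis in normed spaces: a convex function bounded above on a neighborhood of a point is continuous (indeed locally Lipschitz) there, and a convex function that is continuous at a point of its domain has a nonempty subdifferential there, obtained by a Hahn--Banach supporting hyperplane to its epigraph. Everything else is bookkeeping with the Young--Fenchel inequality and the adjoint identity $\l A^*\eta, x\r=\l \eta, Ax\r$.
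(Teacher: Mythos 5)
Your proof is correct and complete. One thing worth knowing: the paper does not prove this proposition at all --- it states it and refers the reader to \cite[Theorem 4.4.3]{bz05}, so your argument supplies a proof the paper omits. Your route is the classical perturbation (value-function) argument, as in Ekeland--T\'{e}mam or Z\u{a}linescu: weak duality from two applications of the Young--Fenchel inequality plus $\langle A^*\eta, x\rangle = \langle \eta, Ax\rangle$; then convexity of $p(y)=\inf_x\{f(x)+g(Ax+y)\}$, boundedness of $p$ above near $0$ (this is exactly where the continuity of $g$ at $Ax_0$ enters), continuity and hence nonemptiness of $\partial p(0)$, and the decoupling step $p(z-Ax)\le f(x)+g(z)$ to convert $-\eta^*\in\partial p(0)$ into a dual point with value at least $v$. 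The cited source instead derives the formula from a Hahn--Banach decoupling/sandwich lemma applied directly to epigraphs; both mechanisms consume the qualification hypothesis at the same spot, and yours has the small bonus of exhibiting dual attainment when the primal value is finite, which is more than the stated equality requires.

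Two routine points you should make explicit in a polished write-up. First, the marginal function $p$ could a priori take the value $-\infty$ somewhere; since $p(0)=v$ is finite (in the case under consideration) and $p$ is bounded above on a neighborhood of $0$, a two-point convexity argument --- writing $0$ as a convex combination of a hypothetical point where $p=-\infty$ and a nearby point where $p$ is bounded above --- rules this out, and only then does the standard lemma \emph{convex and bounded above near a point implies locally Lipschitz, hence subdifferentiable, there} apply in the extended-real-valued setting. Second, when you split the inequality $\bigl[f(x)-\langle A^*\eta^*, x\rangle\bigr]+\bigl[g(z)+\langle \eta^*, z\rangle\bigr]\ge v$ into two independent infima, you should note both partial infima are finite: taking $z=Ax_0\in\mathrm{dom}(g)$ bounds the first from below, and taking $x=x_0\in\mathrm{dom}(f)$ bounds the second, so the sum of the infima is indeed $\ge v$ without any $\infty-\infty$ ambiguity. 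Neither point is a gap in the idea; both are bookkeeping of the kind you already flagged.
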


The proof of Fenchel-Rockafellar duality formula can be found in various books on convex analysis,
see \cite[Theorem 4.4.3]{bz05} for instance.

\begin{lemma}\label{lem4.10}
Let Assumption \ref{dgm.ass1} hold. Consider the dual gradient flow (\ref{asym0}) with $\la(0)=0$.
For any $t>0$ there hold
\begin{align}
\frac{t}{2} \|Ax(t) -y^\d\|^2 &\le \eta(t) + \frac{t}{2}\d^2, \label{dd1.1}\\
\frac{t}{2} \|Ax(t) -y^\d\|^2 + \frac{1}{8t}\|\la(t)\|^2 &\le \eta(t) + t\d^2, \label{dd1.2}
\end{align}
where
\begin{equation}\label{eta}
\eta(t) := \sup_{x \in X}\left\{\R(x^\dag)-\R(x)- \frac{t}{3} \|Ax-y\|^2 \right\}.
\end{equation}
\end{lemma}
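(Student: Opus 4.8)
The plan is to evaluate $\eta(t)$ exactly by means of the Fenchel--Rockafellar duality formula (Proposition \ref{prop1}) and then to feed the resulting identity into the two estimates of Lemma \ref{lem:dgf2}. First I would rewrite the definition \eqref{eta} as
\[
\eta(t) = \R(x^\dag) - \inf_{x\in X}\left\{\R(x) + \frac{t}{3}\|Ax - y\|^2\right\},
\]
so that the remaining infimum has the form $\inf_x\{f(x) + g(Ax)\}$ with $f=\R$ and $g(z) := \frac{t}{3}\|z-y\|^2$. Since $g$ is finite and continuous on all of the Hilbert space $Y$ and $\mathrm{dom}(\R)\ne\emptyset$, the constraint qualification of Proposition \ref{prop1} holds automatically. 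A direct computation of the conjugate in $Y$ gives $g^*(\zeta) = \l\zeta, y\r + \frac{3}{4t}\|\zeta\|^2$, hence $g^*(-\eta) = -\l\eta, y\r + \frac{3}{4t}\|\eta\|^2$; together with $-\R^*(A^*\eta) + \l\eta, y\r = -d_y(\eta)$ the duality formula produces
\[
\inf_{x\in X}\left\{\R(x) + \frac{t}{3}\|Ax - y\|^2\right\} = -\inf_{\eta\in Y}\left\{d_y(\eta) + \frac{3}{4t}\|\eta\|^2\right\},
\]
and therefore the key identity
\[
\eta(t) = \R(x^\dag) + \inf_{\eta\in Y}\left\{d_y(\eta) + \frac{3}{4t}\|\eta\|^2\right\}.
\]

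The second ingredient is the elementary weak-duality bound $-d_y(\mu)\le \R(x^\dag)$, valid for every $\mu\in Y$: it follows from the Fenchel--Young inequality $\R(x^\dag) + \R^*(A^*\mu)\ge \l A^*\mu, x^\dag\r = \l\mu, y\r$ combined with $Ax^\dag = y$. With these two facts in hand the estimates follow quickly. To prove \eqref{dd1.2} I would take the infimum over $\mu$ on the right-hand side of \eqref{eq:dgf3.5}; since the left-hand side and the term $-d_y(\la(t))$ are independent of $\mu$ and the coefficient of $\|\mu\|^2$ in \eqref{eq:dgf3.5} is exactly $\frac{3}{4t}$, this yields
\[
\frac{t}{2}\|Ax(t)-y^\d\|^2 + \frac{1}{8t}\|\la(t)\|^2 \le \inf_{\mu\in Y}\left\{d_y(\mu) + \frac{3}{4t}\|\mu\|^2\right\} - d_y(\la(t)) + t\d^2.
\]
Bounding $-d_y(\la(t))\le\R(x^\dag)$ and inserting the identity for $\eta(t)$ then gives \eqref{dd1.2} directly. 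For \eqref{dd1.1} I would argue identically from \eqref{eq:dgf3}, whose $\|\mu\|^2$-coefficient $\frac{1}{2t}$ is smaller than $\frac{3}{4t}$; monotonicity of the infimum in this coefficient gives $\inf_\mu\{d_y(\mu)+\frac{\|\mu\|^2}{2t}\}\le \inf_\mu\{d_y(\mu)+\frac{3}{4t}\|\mu\|^2\}$, and the same substitution closes the estimate.

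The only genuinely delicate step is the exact evaluation of $\eta(t)$: one must correctly compute the conjugate $g^*$ of the shifted quadratic --- the factor $\frac{3}{4t}$ has to match the coefficients appearing in Lemma \ref{lem:dgf2} --- and verify the hypotheses of Proposition \ref{prop1}. Both are routine here because $Y$ is a Hilbert space and $g$ is a continuous quadratic; everything else reduces to taking an infimum over $\mu$ and applying weak duality, so I expect no further obstacle.
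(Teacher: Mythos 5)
Your proof is correct and follows essentially the same route as the paper: both combine the Fenchel--Young bound $d_y(\la(t))\ge -\R(x^\dag)$ with the two estimates of Lemma \ref{lem:dgf2} (taking the infimum over $\mu$, with the coefficient $\frac{3}{4t}$ dominating $\frac{1}{2t}$ for \eqref{dd1.1}), and both identify the resulting infimum $\inf_{\mu\in Y}\{d_y(\mu)+\frac{3}{4t}\|\mu\|^2\}+\R(x^\dag)$ with the supremum form \eqref{eta} via the Fenchel--Rockafellar duality of Proposition \ref{prop1}. The only cosmetic differences are the order of the steps and that you carry out the conjugate computation $g^*(\zeta)=\l\zeta,y\r+\frac{3}{4t}\|\zeta\|^2$ and the constraint-qualification check explicitly, which the paper leaves implicit.
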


\begin{proof}
Since $y = A x^\dag$, from the Fenchel-Young inequality it follows that
$$
d_y(\la(t)) = \R^*(A^* \la(t)) - \l A^*\la(t), x^\dag\r \ge - \R(x^\dag).
$$
Combining this with the estimates in Lemma \ref{lem:dgf2} and noting that the estimates hold
for all $\mu\in Y$, we can obtain (\ref{dd1.1}) and (\ref{dd1.2}) with $\eta(t)$ defined by
$$
\eta(t) = \inf_{\mu\in Y} \left\{d_y(\mu) + \R(x^\dag) + \frac{3\|\mu\|^2}{4t}\right\}.
$$
It remains only to show that $\eta(t)$ can be given by (\ref{eta}). By rewriting $\eta(t)$ as
\begin{align*}
\eta(t) & = \R(x^\dag) - \sup_{\mu \in Y} \left\{-d_y(\mu) -  \frac{3\|\mu\|^2}{4t}\right\} \\
& = \R(x^\dag) - \sup_{\mu\in Y} \left\{-\R^*(A^* \mu) + \l \mu, y\r - \frac{3\|\mu\|^2}{4t}\right\},
\end{align*}
we may use Proposition \ref{prop1} to conclude the result immediately.
\end{proof}

\subsection{\bf A priori parameter choice}

In this subsection we analyze the dual gradient flow (\ref{asym0}) under {\it a priori} parameter
choice rule in which a number $t_\d>0$ is chosen depending only on the noise level $\d>0$ and possibly on the source
information of the sought solution $x^\dag$ and then $x(t_\d)$ is used as an approximate solution.
Although {\it a priori} parameter choice rule is of limited practical use, it provides valuable theoretical
guidance toward understanding the behavior of the flow. We first prove the following convergence result.

\begin{theorem}\label{dgf:thm1}
Let Assumption \ref{dgm.ass1} hold. Consider the dual gradient flow (\ref{asym0}) with $\la(0) = 0$.
If $t_\d>0$ is chosen such that $t_\d \to \infty$ and $\d^2 t_\d \to 0$ as $\d \to 0$, then
\begin{align}\label{eq:dgf08}
\R(x(t_\d)) \to \R(x^\dag) \quad \mbox{and} \quad D_\R^{\xi(t_\d)} (x^\dag, x(t_\d)) \to 0
\end{align}
as $\d \to 0$, where $\xi(t) := A^* \la(t)$. Consequently $\|x(t_\d) - x^\dag\| \to 0$ as $\d \to 0$.
\end{theorem}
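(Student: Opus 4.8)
The plan is to reduce both assertions in (\ref{eq:dgf08}) to the single nonnegative quantity $D_\R^{\xi(t)}(\xd, x(t))$ and then to establish a Tikhonov-type limit. First I would record an identity linking this Bregman distance to the dual objective: since $\xi(t) = A^*\la(t) \in \p \R(x(t))$, the Fenchel--Young equality in (\ref{FL1}) gives $\R^*(A^*\la(t)) = \l \la(t), A x(t)\r - \R(x(t))$, and using $y = A\xd$ a direct computation yields
$$
D_\R^{\xi(t)}(\xd, x(t)) = \R(\xd) + d_y(\la(t)).
$$
Starting from (\ref{eq:dgf3.5}) of Lemma \ref{lem:dgf2}, I would discard the two nonnegative terms on its left-hand side to get $d_y(\la(t)) \le d_y(\mu) + \frac{3\|\mu\|^2}{4t} + t\d^2$ for every $\mu \in Y$, add $\R(\xd)$ to both sides, and take the infimum over $\mu$. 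Recognising the resulting infimum through the Fenchel--Rockafellar formula (Proposition \ref{prop1}) exactly as in the proof of Lemma \ref{lem4.10}, this produces the key bound
$$
D_\R^{\xi(t)}(\xd, x(t)) \le \eta(t) + t\d^2,
$$
with $\eta(t)$ as defined in (\ref{eta}).

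The main obstacle is to prove that $\eta(t) \to 0$ as $t \to \infty$; everything else is bookkeeping. Writing $\eta(t) = \R(\xd) - v(t)$ with $v(t) := \inf_{x\in X}\{\R(x) + \frac{t}{3}\|Ax - y\|^2\}$, the function $v$ is nondecreasing in $t$ and, by testing with $x = \xd$, bounded above by $\R(\xd)$, so $v(t) \uparrow v_\infty \le \R(\xd)$. To show equality I would use that, by strong convexity and the weak compactness of the sublevel sets of $\R$ in Assumption \ref{dgm.ass1}(ii), the minimizer $x_t$ defining $v(t)$ exists, satisfies $\R(x_t) \le \R(\xd)$ (so the $x_t$ lie in one fixed weakly compact sublevel set), and satisfies $\frac{t}{3}\|A x_t - y\|^2 \le \R(\xd) - \inf_{x\in X} \R(x)$, forcing $\|A x_t - y\| \to 0$. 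Any weak cluster point $\bar x$ of $x_t$ as $t \to \infty$ then obeys $A\bar x = y$, whence $\R(\bar x) \ge \R(\xd)$ by the $\R$-minimality of $\xd$, while weak lower semicontinuity gives $\R(\bar x) \le \liminf v(t) = v_\infty$. Combining these sandwiches $v_\infty = \R(\xd)$, i.e. $\eta(t) \to 0$. This is precisely the step where Assumption \ref{dgm.ass1}(ii) is indispensable.

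With these two ingredients the conclusions are immediate. Since $t_\d \to \infty$ forces $\eta(t_\d) \to 0$ and $\d^2 t_\d \to 0$ by hypothesis, the key bound gives $D_\R^{\xi(t_\d)}(\xd, x(t_\d)) \to 0$; the strong convexity estimate $c_0 \|x(t_\d) - \xd\|^2 \le D_\R^{\xi(t_\d)}(\xd, x(t_\d))$ then yields $\|x(t_\d) - \xd\| \to 0$. For the functional values I would use the identity $\R(\xd) - \R(x(t)) = D_\R^{\xi(t)}(\xd, x(t)) + \l \la(t), y - A x(t)\r$, so it remains to check $\l \la(t_\d), y - A x(t_\d)\r \to 0$. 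Splitting $y - A x(t) = (y - \yd) + (\yd - A x(t))$ and applying the Cauchy--Schwarz inequality bounds this by $\|\la(t)\|(\d + \|\yd - A x(t)\|)$; Lemma \ref{lem4.10} supplies $\|\la(t)\|^2 \le 8 t(\eta(t) + t\d^2)$ from (\ref{dd1.2}) and $\|\yd - A x(t)\|^2 \le 2\eta(t)/t + \d^2$ from (\ref{dd1.1}), and after setting $t = t_\d$ every resulting product is built from the factors $\eta(t_\d)$ and $t_\d \d^2$, both tending to $0$. Hence $\R(x(t_\d)) \to \R(\xd)$, which together with the above completes the proof.
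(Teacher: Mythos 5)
Your proof is correct, and it takes a genuinely different route from the paper's. The paper first establishes $\limsup_{\d\to 0}\R(x(t_\d))\le \R(x^\dag)$ by inserting near-minimizers $\mu_\ep$ of $d_y$ into (\ref{eq:dgf3.5}), then applies the weak-compactness/subsequence extraction argument directly to the trajectory $x(t_\d)$, identifying every weak cluster point as the $\R$-minimizing solution to upgrade this to $\R(x(t_\d))\to\R(x^\dag)$, and only afterwards deduces $D_\R^{\xi(t_\d)}(x^\dag,x(t_\d))\to 0$ and, via strong convexity, norm convergence. You reverse this order and quantify it: your identity $D_\R^{\xi(t)}(x^\dag,x(t))=\R(x^\dag)+d_y(\la(t))$ is a correct consequence of the Fenchel--Young equality for $A^*\la(t)\in\p\R(x(t))$ together with $y=Ax^\dag$, and retaining the term $-d_y(\la(t))$ that the proof of Lemma \ref{lem4.10} discards yields the strengthened master bound $D_\R^{\xi(t)}(x^\dag,x(t))\le \eta(t)+t\d^2$. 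The one genuinely new ingredient you must supply --- that $\eta(t)\to 0$ as $t\to\infty$, a fact the paper never proves since it only bounds $\eta$ under Assumption \ref{EM1} --- you obtain by a compactness argument applied not to the flow but to the Tikhonov-type minimizers $x_t$ of $\R(x)+\frac{t}{3}\|Ax-y\|^2$; this argument is sound (existence of $x_t$ and the bound $\inf_X\R>-\infty$ both follow from Assumption \ref{dgm.ass1}(ii) with weak lower semicontinuity, and in fact near-minimizers $x_t$ would serve equally well if you wished to avoid the direct method), so both proofs invoke Assumption \ref{dgm.ass1}(ii) at exactly one place, just on different objects. Your concluding bookkeeping also checks out: $\|\la(t)\|^2\le 8t(\eta(t)+t\d^2)$ and $\|Ax(t)-\yd\|^2\le 2\eta(t)/t+\d^2$ from (\ref{dd1.2}) and (\ref{dd1.1}) make $\|\la(t_\d)\|\left(\d+\|Ax(t_\d)-\yd\|\right)$ a combination of the vanishing factors $\eta(t_\d)$ and $t_\d\d^2$. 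What your route buys is a single quantitative inequality from which the convergence of Theorem \ref{dgf:thm1} and, via (\ref{eta:ee}), the rate $D_\R^{\xi(t_\d)}(x^\dag,x(t_\d))=O(\omega\d^q)$ under Assumption \ref{EM1} both fall out, with no subsequence-subsequence argument on the trajectory; what the paper's route buys is that it works entirely with dual near-minimizers $\mu_\ep$ and never needs the auxiliary primal minimization problem at all.
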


\begin{proof}
Recall that $A^* \la(t_\d) \in \p \R(x(t_\d))$. It follows from the
convexity of $\R$ that
\begin{align}\label{eq:dgf10}
\R(x(t_\d)) & \le \R(x^\dag) + \l A^* \la(t_\d), x(t_\d) - x^\dag\r \nonumber \\
& = \R(x^\dag) + \l \la(t_\d), A x(t_\d) - y\r \nonumber \displaybreak[0]\\
& \le \R(x^\dag) + \|\la(t_\d)\| \|A x(t_\d) - y\|.
\end{align}
We need to treat the second term on the right hand side of (\ref{eq:dgf10}). We will
use (\ref{eq:dgf3.5}). By the Young-Fenchel inequality and $y = A x^\dag$, we have
\begin{align}\label{eq:dgf5}
d_y(\mu) = \R^*(A^* \mu) - \l A^* \mu, x^\dag\r \ge - \R(x^\dag)
\end{align}
which implies that
$$
\inf_{\mu \in Y} d_y(\mu) \ge - \R(x^\dag)>-\infty.
$$
Thus for any $\ep>0$ we can find $\mu_\ep\in Y$ such that
\begin{align}\label{eq:dgf5.5}
d_y(\mu_\ep) \le \inf_{\mu\in Y} d_y(\mu) + \ep.
\end{align}
By taking $t = t_\d$ and $\mu = \mu_\ep$ in (\ref{eq:dgf3.5}) we then obtain
\begin{align*}
\frac{t_\d}{2} \|A x(t_\d)-y^\d\|^2 + \frac{1}{8 t_\d} \|\la(t_\d)\|^2
\le \ep + \frac{3\|\mu_\ep\|^2}{4 t_\d} + t_\d \d^2.
\end{align*}
Using $\|A x(t_\d)-y\|^2 \le 2 (\|A x(t_\d) - y^\d\|^2 + \d^2)$ and the given conditions on $t_\d$,
it follows that
\begin{align*}
\limsup_{\d\to 0} \left(\frac{t_\d}{4} \|A x(t_\d)-y\|^2 + \frac{1}{8 t_\d} \|\la(t_\d)\|^2\right)
\le \ep.
\end{align*}
Sine $\ep>0$ can be arbitrarily small, we must have
\begin{align*}
t_\d \|A x(t_\d)-y\|^2 + \frac{1}{2 t_\d} \|\la(t_\d)\|^2\to 0 \quad \mbox{as } \d \to 0
\end{align*}
which in particular implies
\begin{align}\label{eq:dgf011}
\|A x(t_\d)-y\| \to 0 \quad \mbox{and} \quad \|\la(t_\d) \| \|A x(t_\d) - y\| \to 0
\end{align}
as $\d\to 0$. Thus, it follows from (\ref{eq:dgf10}) that
\begin{align}\label{eq:dgf101}
\limsup_{\d\to 0} \R(x(t_\d)) \le \R(x^\dag).
\end{align}
Based on (\ref{eq:dgf101}), we can find a constant $C$ independent of $\d$ such that
$\R(x(t_\d)) \le C$. Since every sublevel set of $\R$ is weakly compact, for any sequence
$\{y^{\d_k}\}$ of noisy data satisfying $\|y^{\d_k}-y\|\le \d_k \to 0$, by taking
a subsequence if necessary, we can conclude $x(t_{\d_k}) \rightharpoonup \bar x$ as
$k \to \infty$ for some $\bar x \in X$, where ``$\rightharpoonup$" denotes the weak convergence. 
By the weak lower semi-continuity of norms and $\R$
we can obtain from (\ref{eq:dgf011}) and (\ref{eq:dgf101}) that
$$
\|A \bar x - y\| \le \liminf_{k\to \infty} \|A x(t_{\d_k}) - y\| =0
$$
and
$$
\R(\bar x) \le \liminf_{k\to \infty} \R(x(t_{\d_k}) \le \limsup_{k\to \infty} \R(x(t_{\d_k}) \le \R(x^\dag).
$$
Thus $A \bar x = y$. Since $x^\dag$ is the $\R$-minimizing solution of $Ax = y$, we must have
$\R(\bar x) = \R(x^\dag)$ and therefore $\R(x(t_{\d_k})) \to \R(x^\dag)$ as $k \to \infty$.
By a subsequence-subsequence argument we can obtain $\R(x_{t_\d}) \to \R(x^\dag)$ as $\d\to 0$.
Consequently, by using (\ref{eq:dgf011}), we have
$$
D_\R^{\xi(t_\d)}(x^\dag, x(t_\d))
= \R(x^\dag) - \R(x(t_\d)) - \l \la(t_\d), y- A x(t_\d)\r \to 0
$$
as $\d \to 0$. The assertion $\|x(t_\d)-x^\dag\| \to 0$ then follows from the strong convexity of $\R$.
\end{proof}

Next we consider deriving the convergence rates under {\it a priori} parameter choice rule. For ill-posed
problems, convergence rate of the method depends crucially on the source condition of the sought solution $x^\dag$.
We now assume $x^\dag$ satisfies the following variational source condition.

\begin{assumption}\label{EM1}
For the unique $\R$-minimizing solution $x^\dag$ of (\ref{1.1}) there is an error measure function
$\E^\dag: X \to [0, \infty)$ satisfying $\E^\dag(x^\dag) =0$ such that
\[
\E^\dag(x) \le \R(x)-\R(x^\dag) + \omega\|Ax-y\|^q,
\]
for some $0<q\leq 1$ and some number $\omega>0$.
\end{assumption}

Since it was introduced in \cite{HKPS2007}, variational source condition has received tremendous attention,
various extensions, refinements and verification have been conducted and many convergence rate results
have been established based on this notion; see \cite{CJYZ2022,F2018,FG2012,HM2012,HW2015,HW2017,Jin2022}
for instance. In Assumption \ref{EM1}, the error measure function $\E^\dag$ is used to measure the speed of
convergence; it can be taken in various forms and the usual choice of $\E^\dag$ is the Bregman distance
induced by $\R$.

Under the variational source condition on $x^\dag$ specified in Assumption \ref{EM1}, the dual gradient flow
(\ref{asym0}) admits the following convergence rate when $t_\d$ is chosen by an {\it a priori} parameter
choice rule.

\begin{theorem}
Let Assumption \ref{dgm.ass1} hold. Consider the dual gradient flow (\ref{asym0}) with $\la(0) =0$
for solving (\ref{R_min}). If $x^\dag$ satisfies the variational source condition specified in Assumption
\ref{EM1}, then
$$
\E^\dag(x(t)) \le C \left(\omega^{\frac{2}{2-q}} t^{-\frac{q}{2-q}} + \omega^{\frac{1}{2-q}} t^{\frac{1-q}{2-q}} \d
+ \d^2 t + \omega \d^q\right)
$$
for all $t>0$, and consequently, for a
number $t_\d$ satisfying $t_\d \sim \omega \d^{q-2}$ there holds
$$
\E^\dag(x(t_\d)) \le  C \omega \d^q,
$$
where $C$ is a positive constant depending only on $q$.
\end{theorem}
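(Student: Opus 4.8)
The plan is to begin from the variational source condition evaluated at the flow point $x = x(t)$, which immediately gives $\E^\dag(x(t)) \le \R(x(t)) - \R(x^\dag) + \omega\|Ax(t)-y\|^q$, and then to bound the functional gap $\R(x(t))-\R(x^\dag)$ and the residual $\|Ax(t)-y\|$ by feeding the source condition into the two estimates of Lemma \ref{lem4.10}. The first and most important step is to obtain an explicit bound on $\eta(t)$. Since $\E^\dag \ge 0$, Assumption \ref{EM1} gives $\R(x^\dag)-\R(x) \le \omega\|Ax-y\|^q$ for every $x$, so setting $s = \|Ax-y\|$ reduces the supremum in (\ref{eta}) to the one-dimensional maximization $\sup_{s\ge 0}\{\omega s^q - \frac{t}{3}s^2\}$. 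I would solve this by elementary calculus (the maximizer satisfies $s^{2-q} = 3q\omega/(2t)$) to arrive at $\eta(t) \le C_q\,\omega^{2/(2-q)}t^{-q/(2-q)}$ with $C_q$ depending only on $q$. This is the conceptual heart of the proof: it is exactly where the source exponent $q$ enters and where the abstract duality gap $\eta(t)$ is turned into a concrete power law in $t$ and $\omega$.

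Next I would convert (\ref{dd1.1}) and (\ref{dd1.2}) into bounds on the two quantities I need. From (\ref{dd1.1}) together with $\|Ax(t)-y\|^2 \le 2(\|Ax(t)-y^\d\|^2+\d^2)$ I get $\|Ax(t)-y\| \le 2(\eta(t)/t)^{1/2}+2\d$, and from (\ref{dd1.2}) I get $\|\la(t)\| \le 2\sqrt{2}\,(t\eta(t))^{1/2}+2\sqrt{2}\,t\d$. Inserting the $\eta(t)$-bound and simplifying the exponents (using the identities $1-\frac{q}{2-q}=\frac{2(1-q)}{2-q}$ and $1+\frac{q}{2-q}=\frac{2}{2-q}$) turns these into $\|Ax(t)-y\| \lesssim \omega^{1/(2-q)}t^{-1/(2-q)}+\d$ and $(t\eta(t))^{1/2} \lesssim \omega^{1/(2-q)}t^{(1-q)/(2-q)}$.

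For the functional gap I would reuse the inequality already derived inside the proof of Theorem \ref{dgf:thm1}: since $A^*\la(t)\in\p\R(x(t))$, convexity gives $\R(x(t))-\R(x^\dag) \le \|\la(t)\|\,\|Ax(t)-y\|$. Expanding the product of the two bounds above produces three terms, of sizes $\eta(t)$, $\d(t\eta(t))^{1/2}$ and $t\d^2$; together with the contribution $\omega\|Ax(t)-y\|^q$ from the source condition (which, after $(a+b)^q\le a^q+b^q$, splits into a piece $\sim\omega^{2/(2-q)}t^{-q/(2-q)}$ and a piece $\sim\omega\d^q$), these assemble into exactly the four advertised monomials. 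I expect the main obstacle to be precisely this bookkeeping: one has to check that, after substituting the $\eta(t)$-bound, every cross term collapses onto one of the four claimed powers of $\omega,t,\d$ with no residue, and that the fractional exponents match.

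Finally, the stated rate follows by substituting the balancing choice $t_\d\sim\omega\d^{q-2}$ into the master estimate. A direct check shows each of the four terms reduces to a multiple of $\omega\d^q$; for example $\omega^{2/(2-q)}t_\d^{-q/(2-q)}\sim\omega^{2/(2-q)}(\omega\d^{q-2})^{-q/(2-q)}=\omega\d^q$, and the remaining three are handled identically, giving $\E^\dag(x(t_\d))\le C\omega\d^q$.
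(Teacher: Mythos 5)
Your proposal is correct and follows essentially the same route as the paper's own proof: the same subgradient inequality $\R(x(t))-\R(x^\dag)\le\|\la(t)\|\,\|Ax(t)-y\|$, the same reduction of $\eta(t)$ to the one-dimensional supremum $\sup_{s\ge 0}\{\omega s^q-\tfrac{t}{3}s^2\}$ yielding $\eta(t)\le c_2\,\omega^{2/(2-q)}t^{-q/(2-q)}$, the same bounds on $\|Ax(t)-y^\d\|$ and $\|\la(t)\|$ from Lemma \ref{lem4.10}, and the same balancing $t_\d\sim\omega\d^{q-2}$. The only cosmetic difference is that you draw the residual bound from (\ref{dd1.1}) while the paper reads both the residual and $\|\la(t)\|$ off (\ref{dd1.2}); all exponent computations in your sketch check out.
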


\begin{proof}
By the variational source condition on $x^\dag$ we have
$$
\E^\dag(x(t)) \le \R(x(t)) - \R(x^\dag) + \omega \|A x(t) - y\|^q.
$$
Since $A^* \la(t) \in \p \R(x(t))$, it follows from the convexity of $\R$ that
\begin{align*}
\R(x(t))- \R(x^\dag) \le \l A^* \la(t), x(t) - x^\dag\r = \l \la(t), A x(t) -y\r.
\end{align*}
Therefore
\begin{align}\label{dgf.31}
\E^\dag(x(t)) & \le \l \la(t), A x(t)-y\r + \omega \|A x(t) -y\|^q \nonumber \\
& \le \|\la(t)\| \|A x(t) -y\| + \omega \|A x(t) - y\|^q.
\end{align}
We next use the inequality (\ref{dd1.2}) in Lemma \ref{lem4.10} to estimate $\|A x(t) -y\|$ and $\|\la(t)\|$.
By the variational source condition on $x^\dag$ and the nonnegativity of $\E^\dag$ we have $\R(x^\dag)-\R(x)
\le \omega\|A x-y\|^q$. Therefore
\begin{align}\label{eta:ee}
\eta(t) &\le \sup_{x \in X}\left\{\omega\|Ax-y\|^q- \frac{t}{3} \|Ax-y\|^2 \right\} \nonumber \\
& \le \sup_{s\ge 0} \left\{\omega s^q - \frac{t}{3}s^2 \right\} = c_2 \omega^{\frac{2}{2-q}} t^{-\frac{q}{2-q}}
\end{align}
with $c_2 = (1-\frac{q}{2})(\frac{3q}{2})^{\frac{q}{2-q}}$. This together with (\ref{dd1.2}) gives
$$
\frac{t}{2} \|A x(t) - y^\d\|^2 + \frac{1}{8t} \|\la(t)\|^2
\le c_2 \omega^{\frac{2}{2-q}} t^{-\frac{q}{2-q}} + \d^2 t
$$
which implies that
$$
\|A x(t) - y^\d\| \le C\left(\left(\frac{\omega}{t}\right)^{\frac{1}{2-q}} + \d\right)
\quad \mbox{ and } \quad
\|\la(t)\| \le C \left(\omega^{\frac{1}{2-q}} t^{\frac{1-q}{2-q}} + \d t\right).
$$
Hence we can conclude from (\ref{dgf.31}) that
\begin{align*}
\E^\dag(x(t)) & \le \|\la(t)\| \left(\|A x(t) - y^\d\| + \d\right) + \omega \left(\|A x(t) - y^\d\| + \d\right)^q \\
& \le C \left(\omega^{\frac{1}{2-q}} t^{\frac{1-q}{2-q}} + \d t\right)
\left(\left(\frac{\omega}{t}\right)^{\frac{1}{2-q}} + \d\right)
+ C \omega \left(\left(\frac{\omega}{t}\right)^{\frac{1}{2-q}} + \d\right)^q \\
& \le C \left(\omega^{\frac{2}{2-q}} t^{-\frac{q}{2-q}} + \omega^{\frac{1}{2-q}} t^{\frac{1-q}{2-q}} \d
+ \d^2 t + \omega \d^q\right).
\end{align*}
The proof is thus complete.
\end{proof}

\subsection{The discrepancy principle}

In order to obtain a good approximate solution, the {\it a priori} parameter choice rule considered 
in the previous subsection requires the knowledge of the unknown sought solution which limits 
its applications in practice. Assuming the availability of the noise level $\d>0$, we now consider 
the discrepancy principle which is an {\it a posteriori} rule that chooses a suitable time $t_\d$ 
based only on $\d$ and $y^\d$ such that the residual $\|A x(t_\d)-y^\d\|$ is roughly of the 
magnitude of $\d$. More precisely, the discrepancy principle can be formulated as follows. 

\begin{Rule}\label{Rule:DP}
Let $\tau>1$ be a given number. If $\|A x(0)-y^\d\| \le \tau \d$, we set $t_\d:=0$; otherwise, 
we define $t_\d >0$ to be a number such that 
$$
\|A x({t_\d})-y^\d\| = \tau \d.
$$
\end{Rule}

In the following result we will show that Rule \ref{Rule:DP} always outputs a finite number $t_\d$ 
and $x(t_\d)$ can be used as an approximate solution to $x^\dag$.

\begin{theorem}\label{thm:ALM1}
Let Assumption \ref{dgm.ass1} hold. Consider the dual gradient flow (\ref{asym0}). Then Rule \ref{Rule:DP} 
with $\tau>1$ outputs a finite number $t_\d$ and
\begin{align}\label{eq:dgf8}
\R(x(t_\d)) \to \R(x^\dag) \quad \mbox{and} \quad D_\R^{\xi(t_\d)} (x^\dag, x(t_\d)) \to 0
\end{align}
as $\d \to 0$, where $\xi(t) := A^* \la(t)$. Consequently $\|x(t_\d) - x^\dag\| \to 0$ as $\d \to 0$.
\end{theorem}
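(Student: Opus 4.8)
The plan is to reduce the statement to the \emph{a priori} convergence result already proved in Theorem \ref{dgf:thm1}: namely, I would first show that Rule \ref{Rule:DP} yields a finite $t_\d$, and then verify that this $t_\d$ satisfies the two hypotheses $t_\d\to\infty$ and $\d^2 t_\d\to0$ of Theorem \ref{dgf:thm1}, whereupon \eqref{eq:dgf8} follows verbatim. Throughout I would exploit the quantity $\eta(t)$ of \eqref{eta}, the estimates \eqref{dd1.1}--\eqref{dd1.2} of Lemma \ref{lem4.10}, and the monotonicity of the residual from Lemma \ref{lem.01}. For finiteness, assume $\|A x(0)-y^\d\|>\tau\d$ (otherwise $t_\d=0$). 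Since $\R$ is bounded below, $\eta(t)\le \R(x^\dag)-\inf_{x\in X}\R(x)=:B<\infty$, so \eqref{dd1.1} gives $\|A x(t)-y^\d\|^2\le 2\eta(t)/t+\d^2\le 2B/t+\d^2$, which drops strictly below $\tau^2\d^2$ once $t>2B/((\tau^2-1)\d^2)$ (here $\tau>1$ is essential). As $t\mapsto\|A x(t)-y^\d\|$ is continuous and, by Lemma \ref{lem.01}, nonincreasing, the intermediate value theorem produces a finite $t_\d$ with $\|A x(t_\d)-y^\d\|=\tau\d$.

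For the reduction, insert $\|A x(t_\d)-y^\d\|=\tau\d$ into \eqref{dd1.1} and \eqref{dd1.2} (for $t_\d>0$). The first yields $\tfrac{t_\d}{2}(\tau^2-1)\d^2\le\eta(t_\d)$, i.e.
\begin{align}\label{eq:dp-a}
\d^2 t_\d \le \frac{2}{\tau^2-1}\,\eta(t_\d),
\end{align}
and the second, after dropping the nonnegative residual term on its left-hand side and using \eqref{eq:dp-a}, gives $\|\la(t_\d)\|^2\le \tfrac{8(\tau^2+1)}{\tau^2-1}\,t_\d\,\eta(t_\d)$. Combining these two bounds I obtain $\d\,\|\la(t_\d)\|\le C(\tau)\,\eta(t_\d)$, and then, via $\|A x(t_\d)-y\|\le(\tau+1)\d$ and the subgradient inequality $\R(x(t_\d))\le\R(x^\dag)+\|\la(t_\d)\|\,\|A x(t_\d)-y\|$, the clean estimate $\R(x(t_\d))\le\R(x^\dag)+C'(\tau)\,\eta(t_\d)$. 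Hence everything reduces to proving $\eta(t_\d)\to0$. Since $\eta$ is nonincreasing with $\eta(t)\to0$ as $t\to\infty$ — a standard Tikhonov fact equivalent to $\inf_{x\in X}\{\R(x)+\tfrac t3\|A x-y\|^2\}\uparrow\R(x^\dag)$, which I would prove by extracting a weak limit of near-minimizers and using weak lower semicontinuity of $\R$ together with the minimality of $x^\dag$ — it suffices to establish $t_\d\to\infty$; this then forces $\eta(t_\d)\to0$ and, by \eqref{eq:dp-a}, $\d^2 t_\d\to0$, exactly the hypotheses of Theorem \ref{dgf:thm1}.

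The hard part is $t_\d\to\infty$. Here I would invoke continuous dependence on the data: writing $\la^0,x^0$ for the flow driven by the exact datum $y$ and $\la^\d,x^\d$ for the one driven by $y^\d$, the bound $\tfrac{d}{dt}\|\la^\d-\la^0\|\le L\|\la^\d-\la^0\|+\d$ and Gronwall give $\|\la^\d(t)-\la^0(t)\|\le \tfrac{\d}{L}(e^{Lt}-1)$, whence $\|x^\d(t)-x^0(t)\|\le C_t\,\d$ by Proposition \ref{dgm.prop1}. Thus, for any fixed $T$, $\|A x^\d(T)-y^\d\|\ge r(T)-C_T'\d$ with $r(T):=\|A x^0(T)-y\|$, and this exceeds $\tau\d$ for small $\d$ provided $r(T)>0$; by monotonicity of the residual this forces $t_\d>T$, and as $T$ is arbitrary, $t_\d\to\infty$. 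The only remaining (degenerate) possibility is $r(t_*)=0$ at some finite $t_*$, i.e. the exact flow reaches $x^\dag$ in finite time; this case I would settle directly with the same continuous-dependence estimate, passing to the limit along any bounded subsequence of $t_\d$ and identifying its limit with $x^\dag$, using that $A^*\la^0(t_*)\in\p\R(x^0(t_*))$ and $A x^0(t_*)=y$ are precisely the optimality conditions \eqref{FL1} characterizing the $\R$-minimizing solution.

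Finally, with $t_\d\to\infty$ and $\d^2 t_\d\to0$ in hand, Theorem \ref{dgf:thm1} applies directly and delivers $\R(x(t_\d))\to\R(x^\dag)$ and $D_\R^{\xi(t_\d)}(x^\dag,x(t_\d))\to0$, and hence $\|x(t_\d)-x^\dag\|\to0$ by the strong convexity of $\R$. If one prefers a self-contained endgame, the estimate $\R(x(t_\d))\le\R(x^\dag)+C'(\tau)\eta(t_\d)$ gives $\limsup_{\d\to0}\R(x(t_\d))\le\R(x^\dag)$; weak compactness of the sublevel sets of $\R$ then yields a weak limit $\bar x$ of $x(t_{\d_k})$ with $A\bar x=y$ and $\R(\bar x)\le\R(x^\dag)$, forcing $\bar x=x^\dag$ and $\R(x(t_\d))\to\R(x^\dag)$, after which $D_\R^{\xi(t_\d)}(x^\dag,x(t_\d))=\R(x^\dag)-\R(x(t_\d))-\l\la(t_\d),y-A x(t_\d)\r\to0$ since $\|\la(t_\d)\|\,\|A x(t_\d)-y\|\le C'(\tau)\eta(t_\d)\to0$.
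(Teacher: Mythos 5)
Your proposal is correct, but it takes a genuinely different route from the paper's proof, and the difference is instructive. The paper never establishes $t_\d \to \infty$ --- and, as you implicitly recognize, this can genuinely fail (if $x^\dag = \nabla\R^*(0)$ then $t_\d \equiv 0$ for every $\d$). Instead it proves only the weaker fact $\d^2 t_\d \to 0$, by a trivial dichotomy: along a sequence of data, either $t_{\d_l}$ stays bounded (then the claim is automatic) or $t_{\d_l}\to\infty$, in which case it follows from (\ref{eq:dgf6}), i.e.\ from Lemma \ref{lem:dgf2} applied with a near-minimizer $\mu_\ep$ of $d_y$ (available since $\inf_\mu d_y(\mu) \ge -\R(x^\dag)$ by Fenchel--Young). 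The same device applied to (\ref{eq:dgf3.5}) gives $\|\la(t_\d)\|^2 \le 8\ep t_\d + 6\|\mu_\ep\|^2 + 8\d^2 t_\d^2$, hence $\|\la(t_\d)\|\,\|Ax(t_\d)-y\|\to 0$ and $\limsup_{\d\to 0}\R(x(t_\d))\le \R(x^\dag)$, after which the weak-compactness endgame of Theorem \ref{dgf:thm1} is simply rerun --- no case analysis and no stability estimate for the flow. You instead reduce to Theorem \ref{dgf:thm1} by proving $t_\d\to\infty$ in the non-degenerate case via a Gronwall/continuous-dependence comparison of the noisy and exact trajectories (an ingredient appearing nowhere in the paper), together with the auxiliary lemma $\eta(t)\downarrow 0$, and you then patch the degenerate case where the exact flow reaches $x^\dag$ at a finite $t_*$; your identification $x^0(t_*)=x^\dag$ via (\ref{FL1}) is correct, since $A^*\la^0(t_*)\in\p\R(x^0(t_*))$ and $Ax^0(t_*)=y$ give $\R(x)\ge \R(x^0(t_*))$ for every solution of $Ax=y$. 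Your estimates all check out: the inequality $\d^2 t_\d \le \frac{2}{\tau^2-1}\eta(t_\d)$ is exactly the paper's (\ref{Tdelta}) step from the rate theorem, and your bound $\R(x(t_\d))\le \R(x^\dag)+C'(\tau)\eta(t_\d)$ is a clean quantitative statement that essentially anticipates the convergence-rate result under Assumption \ref{EM1}; that, plus the structural information about $t_\d$, is what your longer route buys, while the paper's route is shorter, uniform over all cases, and needs neither $\eta(t)\to 0$ nor flow stability. Two small points to tighten: your finiteness step uses $\inf_{x\in X}\R(x)>-\infty$, which deserves a word --- it follows from Proposition \ref{dgm.prop1}, since $\mathrm{dom}(\R^*)=X^*$ gives $\R^*(0)=-\inf_X \R<\infty$; and in the degenerate case you only discuss bounded subsequences of $t_\d$, whereas unbounded subsequences there must still be routed through the a priori argument --- your inequality $\d^2 t_\d \le \frac{2}{\tau^2-1}\eta(t_\d)$ combined with $\eta(t)\to 0$ does cover this, but the sketch should say so explicitly.
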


\begin{proof}
We first show that Rule \ref{Rule:DP} outputs a finite number $t_\d$.
To see this, let $t > 0$ be any number such that $\|A x(t) - y^\d\|\ge \tau \d$, from (\ref{eq:dgf3}) we
can obtain for any $\mu\in Y$ that
\begin{align}\label{eq:dgf4}
\frac{1}{2} (\tau^2-1) t \d^2 \le d_y(\mu) - d_y(\la(t))+ \frac{\|\mu\|^2}{2t}.
\end{align}
For any $\ep>0$, by taking $\mu$ to be the $\mu_\ep$ satisfying (\ref{eq:dgf5.5}),  we then obtain
\begin{align}\label{eq:dgf6}
\frac{1}{2} (\tau^2-1) t \d^2 \le \ep + \frac{\|\mu_\ep\|^2}{2 t}.
\end{align}
If Rule \ref{Rule:DP} does not output a finite number $t_\d$, then (\ref{eq:dgf6}) holds for
any $t>0$. By taking $t \to \infty$, we can see that the left hand side of (\ref{eq:dgf6}) 
goes to $\infty$ while the right hand side tends to $\ep$ which is a contradiction. Therefore 
Rule \ref{Rule:DP} must output a finite number $t_\d$.

We next show that $\d^2 t_\d \to 0$ as $\d\to 0$.  To see this, let $\{y^{\d_l}\}$ be any sequence
of noisy data satisfying $\|y^{\d_l}-y\| \le \d_l \to 0$ as $l\to \infty$.
If $\{t_{\d_l}\}$ is bounded, then there trivially holds $\d_l^2 t_{\d_l}\to 0$ as $l\to \infty$.
If $t_{\d_l}\to\infty$, then by taking $t = t_{\d_l}$ in (\ref{eq:dgf6}) we have
$$
0 \le \liminf_{l \to \infty} \d_l^2 t_{\d_l}\le \limsup_{l \to \infty} \d_l^2 t_{\d_l}
\le \frac{2\ep}{\tau^2-1}.
$$
Since $\ep>0$ can be arbitrarily small, we thus obtain $\lim_{l\to \infty} \d_l^2 t_{\d_l} =0$ again.

Now we are ready to show (\ref{eq:dgf8}). Since
\begin{align}\label{eq:dgf9}
\|A x(t_\d) - y\| \le \|A x(t_\d) - y^\d\| + \|y^\d - y\| \le (\tau+1) \d,
\end{align}
we immediately obtain
\begin{align}\label{eq:dgf90}
\|A x(t_\d) - y\| \to 0 \quad \mbox{ as } \d \to 0.
\end{align}
To establish $\lim_{\d\to 0} \R(x(t_\d)) = \R(x^\dag)$, we will use (\ref{eq:dgf10}).
We need to estimate $\|\la(t_\d)\|$. If $t_\d>0$, by taking $t = t_\d$ and $\mu = \mu_\ep$ in (\ref{eq:dgf3.5}), we have
\begin{align*}
\frac{\|\la(t_\d)\|^2}{8 t_\d}
\le \ep + \frac{3}{4 t_\d} \|\mu_\ep\|^2 + \d^2 t_\d.
\end{align*}
Therefore
$$
\|\la(t_\d)\|^2 \le 8\ep t_\d + 6 \|\mu_\ep\|^2 + 8 \d^2 t_\d^2
$$
which holds trivially when $t_\d =0$. Combining this with (\ref{eq:dgf9}) we obtain
$$
\|\la(t_\d)\|^2 \|A x(t_\d) - y\|^2
\le (\tau+1)^2 \left(8 \ep \d^2 t_\d + 6\|\mu_\ep\|^2 \d^2 + 8 \d^4 t_\d^2\right).
$$
With the help of the established fact $\d^2 t_\d\to 0$, we can see that
\begin{align}\label{eq:dgf11}
\lim_{\d\to 0} \|\la(t_\d) \| \|A x(t_\d) - y\| =0.
\end{align}
Thus, it follows from (\ref{eq:dgf10}) that
\begin{align}\label{eq:dgf111}
\limsup_{\d\to 0} \R(x(t_\d)) \le \R(x^\dag).
\end{align}
Based on (\ref{eq:dgf90}), (\ref{eq:dgf11}) and (\ref{eq:dgf111}), we can use the same argument
in the proof of Theorem \ref{dgf:thm1} to complete the proof.
\end{proof}

When the sought solution $x^\dag$ satisfies the variational source condition specified in Assumption \ref{EM1},
we can derive the convergence rate on $x(t_\d)$ for the number $t_\d$ determined by Rule \ref{Rule:DP}. 

\begin{theorem}
Let Assumption \ref{dgm.ass1} hold. Consider the dual gradient flow (\ref{asym0}) with $\la(0)=0$ for
solving (\ref{R_min}). Let $t_\d$ be a number determined by Rule \ref{Rule:DP} with $\tau>1$.
If $x^\dag$ satisfies the variational source condition specified in Assumption \ref{EM1}, then
$$
\E^\dag(x(t_\d)) \le  C \omega \delta^q,
$$
where $C$ is a positive constant depending only on $q$ and $\tau$.
\end{theorem}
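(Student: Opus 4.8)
The plan is to combine the variational source condition with the general estimate \eqref{dgf.31} from the convergence rate proof under the \emph{a priori} rule, now exploiting the fact that Rule \ref{Rule:DP} pins down the residual at $\|Ax(t_\d)-y^\d\|=\tau\d$. The starting point is the same inequality derived from the source condition and the convexity of $\R$,
\begin{align*}
\E^\dag(x(t_\d)) \le \|\la(t_\d)\|\,\|Ax(t_\d)-y\| + \omega\|Ax(t_\d)-y\|^q.
\end{align*}
By the triangle inequality and the discrepancy bound, $\|Ax(t_\d)-y\| \le (\tau+1)\d$, so the second term is already $\le \omega(\tau+1)^q\d^q$, which is of the desired order $C\omega\d^q$. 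The work is therefore entirely in bounding the first term $\|\la(t_\d)\|\,\|Ax(t_\d)-y\|$ by the same order, i.e.\ showing $\|\la(t_\d)\|\,\d \le C\omega\d^q$, equivalently $\|\la(t_\d)\| \le C\omega\d^{q-1}$.

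\medskip

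First I would extract a bound on $\|\la(t_\d)\|$ from \eqref{dd1.2} in Lemma \ref{lem4.10}, which gives
$$
\frac{1}{8t_\d}\|\la(t_\d)\|^2 \le \eta(t_\d) + t_\d\d^2.
$$
The source condition lets me reuse the estimate \eqref{eta:ee} already computed in the \emph{a priori} rate proof, namely $\eta(t) \le c_2\,\omega^{2/(2-q)} t^{-q/(2-q)}$, yielding
$$
\|\la(t_\d)\|^2 \le C\left(\omega^{\frac{2}{2-q}} t_\d^{\frac{2-2q}{2-q}} + t_\d^2\,\d^2\right).
$$
To convert this into the claimed rate I next need an upper bound on the stopping time $t_\d$ in terms of $\d$ and $\omega$. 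This is the crux of the discrepancy-principle argument: I would feed the defining relation $\|Ax(t_\d)-y^\d\|=\tau\d$ into \eqref{dd1.1}, which reads $\frac{t_\d}{2}\tau^2\d^2 \le \eta(t_\d)+\frac{t_\d}{2}\d^2$, whence $\frac{1}{2}(\tau^2-1)t_\d\d^2 \le \eta(t_\d) \le c_2\omega^{2/(2-q)}t_\d^{-q/(2-q)}$. Solving this inequality for $t_\d$ gives precisely $t_\d \le C\,\omega\,\d^{q-2}$ with $C$ depending only on $q$ and $\tau$, i.e.\ the stopping time automatically realizes the optimal \emph{a priori} scaling $t_\d \sim \omega\d^{q-2}$.

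\medskip

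\textbf{The main obstacle} is this last step of turning the implicit bound $t_\d\d^2 \le C\omega^{2/(2-q)}t_\d^{-q/(2-q)}$ into the explicit $t_\d \le C\omega\d^{q-2}$, since $\eta(t_\d)$ depends on $t_\d$ itself; but because the right-hand exponent $-q/(2-q)$ is negative, the inequality is of the form $t_\d^{1+q/(2-q)} \le C\omega^{2/(2-q)}\d^{-2}$, i.e.\ $t_\d^{2/(2-q)} \le C\omega^{2/(2-q)}\d^{-2}$, which rearranges cleanly to the claimed power of $\d$. Once $t_\d \le C\omega\d^{q-2}$ is in hand, I would substitute it back into the bound for $\|\la(t_\d)\|^2$: both terms become $O\!\left(\omega^2\d^{2q-2}\right)$ after the substitution, so $\|\la(t_\d)\| \le C\omega\d^{q-1}$ as required. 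Multiplying by $\|Ax(t_\d)-y\|\le(\tau+1)\d$ then gives $\|\la(t_\d)\|\,\|Ax(t_\d)-y\| \le C\omega\d^q$, and combining with the already-controlled term $\omega\|Ax(t_\d)-y\|^q$ completes the estimate $\E^\dag(x(t_\d)) \le C\omega\d^q$ with $C$ depending only on $q$ and $\tau$.
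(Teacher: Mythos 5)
Your proposal is correct and follows essentially the same route as the paper's own proof: the same source-condition/convexity starting estimate, the same extraction of $t_\d \le C\,\omega\,\d^{q-2}$ by feeding $\|Ax(t_\d)-y^\d\|=\tau\d$ into \eqref{dd1.1} together with \eqref{eta:ee}, and the same bound $\|\la(t_\d)\|\le C\omega\d^{q-1}$ from \eqref{dd1.2} substituted back. The only detail you omit is the case $t_\d=0$, where the defining relation $\|Ax(t_\d)-y^\d\|=\tau\d$ need not hold; there $\la(0)=0$ annihilates the first term and the bound $\E^\dag(x(0))\le \omega(1+\tau)^q\d^q$ follows at once, as the paper notes in one line.
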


\begin{proof}
By using the variational source condition on $x^\dag$, $A^* \la(t_\d) \in \p \R(x(t_\d))$, the convexity of $\R$
and the definition of $t_\d$,  we have
\begin{align}\label{dgf.16}
\E^\dag(x(t_\d))
& \le \R(x(t_\d))-\R(x^\dag) + \omega \|Ax(t_\d)-y\|^q \nonumber \\
& \le \l \la(t_\d), Ax(t_\d)-y\r +  \omega \|Ax(t_\d)-y\|^q \nonumber \\
& \le \|\la(t_\d)\|\|Ax(t_\d)-y\| +  \omega \|Ax(t_\d)-y\|^q \nonumber \\
& \le (1+\tau)\|\la(t_\d)\|\d +  \omega (1+\tau)^q\d^q.
\end{align}
If $t_\d=0$, then $\la(t_\d)=0$ and hence
$$
\E^\dag(x(t_\d)) \le \omega (1+\tau)^q\d^q.
$$
Therefore, we may assume $t_\d>0$. By using (\ref{dd1.1}) and (\ref{eta:ee}) with $t = t_\d$
and noting that $\|A x(t_\d) -y^\d\| = \tau \d$ we have
\begin{equation*}
\d^2 t_\d \le \frac{2}{\tau^2-1}\eta(t_\d)\le \frac{2c_2}{\tau^2-1} \omega^{\frac{2}{2-q}} t_\d^{-\frac{q}{2-q}},
\end{equation*}
which implies that
\begin{equation}\label{Tdelta}
t_\d \leq \left(\frac{2c_2}{\tau^2-1}\right)^{\frac{2-q}{2}} \omega \delta^{q-2}.
\end{equation}
Therefore, by using (\ref{dd1.2}) and (\ref{eta:ee}) with $t = t_\d$ we can obtain
\begin{align*}
\|\la(t_\d)\|^2 \le 8 t_\d \eta(t_\d) + 8 \d^2 t_\d^2
\le 8 c_2 \omega^{\frac{2}{2-q}} t_\d^{\frac{2(1-q)}{2-q}}+ 8 \d^2 t_{\d}^2
\le c_3 \omega^2 \d^{2(q-1)},
\end{align*}
where $c_3:= 4(1+\tau^2) (\frac{2c_2}{\tau^2-1})^{2-q}$. Combining this with (\ref{dgf.16}) we
finally obtain
\begin{equation*}
\E^\dag(x^\d(t_\d)) \le \left(\sqrt{c_3} (1+\tau) +  (1+\tau)^q\right) \omega \d^q.
\end{equation*}
The proof is complete.
\end{proof}

\subsection{\bf The heuristic discrepancy principle}

The performance of the discrepancy principle, i.e. Rule \ref{Rule:DP}, for the dual gradient flow
depends on the knowledge of the noise level. In case the accurate information on the noise
level is available, satisfactory approximate solutions can be produced, as shown in the
previous subsection. When noise level information is not available or reliable, we need to
consider heuristic rules, which use only $y^\d$, to produce approximate solutions \cite{hr96,j16,j17,KN2008,KR2020}. 
In this subsection we will consider the following heuristic discrepancy principle motivated by \cite{hr96,j17}.

\begin{Rule}\label{dgf.HR}
For a fixed number $a>0$ let $\Theta(t, y^\d) := (t+a)\|A x(t)-y^\d\|^2$ for $t \ge 0$
and choose $t_*:=t_*(y^\d)$ to be a number such that
\begin{align}\label{ALM:HR2}
\Theta(t_*, y^\d) = \min \left\{\Theta(t, y^\d): t\ge 0\right\}.
\end{align}
\end{Rule}

Note that the parameter $t_*$ chosen by Rule \ref{dgf.HR}, if exists, is independent of the noise level $\d>0$.
According to the Bakushinskii's veto, we can not expect a convergence result on $x(t_*)$ in the worst
case scenario as we obtained for the discrepancy principle in the previous subsection.
Additional conditions should be imposed on the noisy data in order to establish a convergence result on
this heuristic rule. We will use the following condition.

\begin{assumption}\label{dgf.ass2}
$\{y^\d\}$ is a family of noisy data with $\d:=\|y^\d-y\|\rightarrow 0$ and there is a constant
$\kappa>0$ such that
\begin{align}\label{eq:noise}
\| y^\d-A x\| \ge \kappa \|y^\d-y\|
\end{align}
for every $y^\d$ and every $x$ along the primal trajectory $S(y^\d)$ of the dual gradient flow, i.e. 
$S(y^\d):=\{x(t): t\ge 0\}$, where $x(t)$ is defined by the dual gradient flow (\ref{asym0}) 
using the noisy data $y^\d$.
\end{assumption}

This assumption was first introduced in \cite{j16}. It can be interpreted as follows. For ill-posed problems 
the operator $A$ usually has smoothing effect so that $A x$ admits certain regularity, while 
$y^\d$ contains random noise and hence exhibits salient irregularity. The condition (\ref{eq:noise}) 
roughly means that subtracting any regular function of the form $Ax$ with $x \in S(y^\d)$ can not significantly remove 
the randomness of noise. During numerical computation, we may testify Assumption \ref{dgf.ass2} by observing 
the tendency of $\|A x(t) - y^\d\|$ as $t$ increases: if $\|A x(t) - y^\d\|$ does not go below a very small number, 
we should have sufficient confidence that assumption \ref{dgf.ass2} holds true.

Under Assumption \ref{dgf.ass2} we have $\Theta(t, y^\d) \ge (\kappa \d)^2 t \to \infty$
as $t \rightarrow \infty$, which demonstrates that there must exist a finite integer $t_*$
satisfying (\ref{ALM:HR2}). We have the following convergence result on $x(t_*)$.

\begin{theorem}\label{thm4}
Let Assumption \ref{dgm.ass1} hold. Consider the dual gradient flow (\ref{asym0}) with $\la(0)=0$,
where $\{y^\d\}$ is a family of noisy data satisfying Assumption \ref{dgf.ass2}. Let $t_*:=t_*(y^\d)$
be determined by Rule \ref{dgf.HR}. Then 
$$
\R(x(t_*)) \to \R(x^\dag) \quad \mbox{and} \quad D_\R^{\xi(t_*)} (x^\dag, x(t_*)) \to 0
$$
as $\d \to 0$, where $\xi(t):= A^* \la(t)$. Consequently $\|x(t_*) - x^\dag\| \to 0$ as $\d \to 0$.
\end{theorem}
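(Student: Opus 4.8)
The plan is to follow the template of the proofs of Theorem \ref{dgf:thm1} and Theorem \ref{thm:ALM1}: everything reduces to establishing the two limits
$$
\|A x(t_*) - y\| \to 0 \quad \mbox{and} \quad \|\la(t_*)\|\,\|A x(t_*) - y\| \to 0 \quad (\d \to 0),
$$
after which (\ref{eq:dgf10}) gives $\limsup_{\d\to 0}\R(x(t_*)) \le \R(x^\dag)$ and the weak-compactness argument already used in Theorem \ref{dgf:thm1} yields the three claimed convergences. The genuinely new work is to control the minimizer $t_*$, which is independent of $\d$, by combining the minimality property (\ref{ALM:HR2}) with the noise condition (\ref{eq:noise}).

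The central step I would isolate is the claim that $m(\d) := \Theta(t_*, y^\d) \to 0$ as $\d \to 0$. To prove it, fix $\ep>0$ and let $\mu_\ep$ be as in (\ref{eq:dgf5.5}), so that $d_y(\mu_\ep) - d_y(\la(t)) \le \ep$ for every $t$. Evaluating (\ref{eq:dgf3}) at the comparison time $t(\d) := (1+\|\mu_\ep\|)/\d$ with $\mu = \mu_\ep$ gives $\|A x(t(\d)) - y^\d\|^2 \le 2\ep/t(\d) + \|\mu_\ep\|^2/t(\d)^2 + \d^2$, whence a direct computation shows $\Theta(t(\d), y^\d) = (t(\d)+a)\|A x(t(\d)) - y^\d\|^2 \to 2\ep$ as $\d \to 0$ (the factor $t(\d)+a$ is chosen so that it exactly absorbs the $1/t(\d)$ and $\d^2$ contributions). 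By the minimality (\ref{ALM:HR2}) we have $m(\d) \le \Theta(t(\d), y^\d)$, so $\limsup_{\d\to 0} m(\d) \le 2\ep$; since $\ep$ is arbitrary, $m(\d) \to 0$.

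From here the two target limits follow mechanically. Since $m(\d) = (t_*+a)\|A x(t_*)-y^\d\|^2 \ge a\|A x(t_*)-y^\d\|^2$, the residual $\|A x(t_*)-y^\d\| \to 0$, and with $\|y^\d - y\| \le \d$ this yields the first limit. The noise condition (\ref{eq:noise}), applied to $x(t_*) \in S(y^\d)$, gives $\|A x(t_*) - y^\d\| \ge \kappa\d$, hence $m(\d) \ge \kappa^2 \d^2 t_*$ and therefore $\d^2 t_* \to 0$. This last fact is exactly what lets me tame $\|\la(t_*)\|$: inserting $t = t_*$ and $\mu = \mu_\ep$ into (\ref{eq:dgf3.5}) gives $\|\la(t_*)\|^2 \le 8\ep t_* + 6\|\mu_\ep\|^2 + 8\d^2 t_*^2$, and multiplying by $\|A x(t_*)-y\|^2 \le 2\|A x(t_*)-y^\d\|^2 + 2\d^2$ and using the three bounds $t_*\|A x(t_*)-y^\d\|^2 \le m(\d)$, $\d^2 t_* \le m(\d)/\kappa^2$ and $\d \le \|A x(t_*)-y^\d\|/\kappa$ shows that every resulting term is a fixed multiple (depending on $\ep,\kappa,a,\mu_\ep$) of $m(\d)$ or $m(\d)^2$, each of which tends to $0$. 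This gives the second limit.

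With both limits in hand, I would close exactly as in Theorem \ref{dgf:thm1}: (\ref{eq:dgf10}) yields $\limsup_{\d\to 0}\R(x(t_*)) \le \R(x^\dag)$ and a uniform bound $\R(x(t_*)) \le C$; weak compactness of the sublevel sets of $\R$ produces, along any sequence $\d_k \to 0$, a weakly convergent subsequence $x(t_*) \rightharpoonup \bar x$ whose limit solves $A\bar x = y$ and satisfies $\R(\bar x) \le \R(x^\dag)$, forcing $\bar x = x^\dag$ and $\R(x(t_*)) \to \R(x^\dag)$ by a subsequence-subsequence argument; the Bregman distance $D_\R^{\xi(t_*)}(x^\dag, x(t_*)) = \R(x^\dag) - \R(x(t_*)) - \l \la(t_*), y - A x(t_*)\r$ then vanishes, and strong convexity gives $\|x(t_*)-x^\dag\| \to 0$. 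I expect the main obstacle to be the $m(\d) \to 0$ step, and in particular the recognition that the heuristic assumption (\ref{eq:noise}) is precisely what converts this bound into the decay $\d^2 t_* \to 0$ needed to suppress the noise amplification carried by $\|\la(t_*)\|$; without it, $t_*$ could grow too fast in $1/\d$ and the second limit would fail, consistent with Bakushinskii's veto.
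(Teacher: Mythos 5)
Your proposal is correct and follows essentially the same route as the paper's proof: comparison of $\Theta(t_*,y^\d)$ with its value at a reference time of order $1/\d$ (the paper uses $\hat t_\d = 1/\d$) combined with (\ref{eq:dgf3}) and (\ref{eq:dgf5.5}) to get $\Theta(t_*,y^\d)\to 0$, then Assumption \ref{dgf.ass2} to convert this into $\d^2 t_*\to 0$, the bound $\|\la(t_*)\|^2 \le 8\ep t_* + 6\|\mu_\ep\|^2 + 8\d^2 t_*^2$ from (\ref{eq:dgf3.5}) to kill $\|\la(t_*)\|\,\|Ax(t_*)-y\|$, and the weak-compactness argument of Theorem \ref{dgf:thm1} to conclude. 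The only (immaterial) differences are your shifted comparison time $t(\d)=(1+\|\mu_\ep\|)/\d$ and your direct product expansion where the paper instead normalizes by $\sqrt{t_*+a}$; also note that your computation shows $\limsup_{\d\to 0}\Theta(t(\d),y^\d)\le 2\ep$ rather than convergence to $2\ep$, which is all that is needed.
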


\begin{proof}
We first show that for $t_*:=t_*(y^\d)$ determined by Rule \ref{dgf.HR} there hold
\begin{equation}\label{eq:dgf16}
\Theta(t_*, y^\d) \to 0, \quad t_* \d^2 \to  0
\quad \mbox{and} \quad \|A x(t_*)-y^\d\| \to  0
\end{equation}
as $\d \to 0$. To see this, we set $\hat t_\d = 1/\d$ which satisfies $\hat t_\d\to \infty$
and $\hat t_\d \d^2 \to 0$ as $\d\rightarrow 0$. It then follows from the definition of $t_*$ that
\begin{align}\label{eq:dgf15}
\Theta(t_*, y^\d) \le \Theta(\hat t_\d, y^\d).
\end{align}
Let $\ep>0$ be arbitrarily small. As in the proof of Theorem \ref{dgf:thm1} we may choose $\mu_\ep\in Y$
such that (\ref{eq:dgf5.5}) hold. Therefore, it follows from (\ref{eq:dgf15}) and (\ref{eq:dgf3}) that
$$
\Theta(t_*, y^\d) \le \left(1+\frac{a}{\hat t_\d}\right)
\left(2\ep + \frac{\|\mu_\ep\|^2}{\hat t_\d} + \hat t_\d \d^2\right).
$$
By the property of $\hat t_\d$ we thus obtain
$$
\limsup_{\d\to 0} \Theta(t_*, y^\d)\le 2\ep.
$$
Since $\ep>0$ can be arbitrarily small, we must have $\Theta(t_*, y^\d) \to 0$ as $\d \to 0$.
Since $a>0$ and, by Assumption \ref{dgf.ass2}, $\|A x(t_*)-y^\d\| \ge \kappa\d$. We therefore have
$$
\|A x(t_*)-y^\d\|^2 \le \frac{\Theta(t_*, y^\d)}{a} \to 0 \quad \mbox{and} \quad
(\kappa\d)^2 t_* \le \Theta(t_*, y^\d) \to 0
$$
as $\d \to 0$. 

Next, by using (\ref{eq:dgf3.5}) and (\ref{eq:dgf5.5}), we have
$$
\|\la(t_*)\|^2 \le 8\ep t_* + 6\|\mu_\ep\|^2 + 8 t_*^2 \d^2
$$
which together with $a>0$ and $t_* \d^2 \to 0$ as $\d\to 0$ shows
$\|\la(t_*)|/\sqrt{t_*+a}\le C$ for some constant $C$ independent of $\d$.
Recall that $A^* \la(t_*)\in \p \R(x(t_*))$, we have
\begin{align*}
\R(x(t_*))& \le \R(x^\dag) + \l A^* \la(t_*), x(t_*)-x^\dag\r
= \R(x^\dag) + \l \la(t_*), A x(t_*)-y\r \displaybreak[0]\\
& \le \R(x^\dag) + \|\la(t_*)\| \left(\|A x(t_*)-y^\d\| +\d\right) \displaybreak[0]\\
& = \R(x^\dag) + \frac{\|\la(t_*)|}{\sqrt{t_*+a}} \left(\Theta(t_*, y^\d)^{1/2} + \d\sqrt{t_*+a} \right) \displaybreak[0]\\
& \le \R(x^\dag) + C \left(\Theta(t_*, y^\d)^{1/2} + \d \sqrt{t_*+a} \right).
\end{align*}
In view of (\ref{eq:dgf16}) we then obtain
\begin{align}\label{eq:dgf17}
\limsup_{\d\to 0} \R(x(t_*)) \le \R(x^\dag).
\end{align}
Based on (\ref{eq:dgf16}) and (\ref{eq:dgf17}), we can complete the proof by using the same argument
in the proof of Theorem \ref{dgf:thm1}.
\end{proof}

Next we provide an error estimate result on Rule \ref{dgf.HR} for the dual gradient flow (\ref{asym0})
under the variational source condition on $x^\dag$ specified in Assumption \ref{EM1}.

\begin{theorem}\label{thm:dgf3}
Let Assumption \ref{dgm.ass1} hold. Consider the dual gradient flow (\ref{asym0}) with $\la(0) =0$.
Let $t_*:=t_*(y^\d)$ be the number determined by Rule \ref{dgf.HR}. If $x^\dag$ satisfies the
variational source condition specified in Assumption \ref{EM1} and if $\d_*: = \|A x(t_*)-y^\d\|\ne 0$, then
\begin{align}\label{dgf.H1}
\E^\dag(x(t_*)) \le C_1 \left( 1 + \frac{\d^2}{\d_*^2}\right) \left(\d^2+\omega (\d+\d_*)^q\right),
\end{align}
where $C_1$ is a constant depending only on $a$ and $q$. If $\{y^\d\}$ is a family of noisy data satisfying
Assumption \ref{dgf.ass2}, then
\begin{align}\label{dgf.H2}
\E^\dag(x(t_*)) \le C_2  \left(\d^2+\omega (\d+\d_*)^q\right)
\end{align}
for some constant $C_2$ depending only on $a$, $q$ and $\kappa$.
\end{theorem}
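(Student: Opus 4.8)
The plan is to start from the variational-source-condition estimate already available at a general time, specialize it to $t_*$, and reduce everything to controlling $\|\la(t_*)\|$ by means of the minimality property in Rule \ref{dgf.HR}. Setting $r_*:=\|Ax(t_*)-y\|\le \d+\d_*$ and recalling $A^*\la(t_*)\in\p\R(x(t_*))$, the convexity of $\R$ together with Assumption \ref{EM1} gives, exactly as in (\ref{dgf.31}),
$$
\E^\dag(x(t_*)) \le \|\la(t_*)\|\,r_* + \omega r_*^q \le \|\la(t_*)\|(\d+\d_*) + \omega(\d+\d_*)^q .
$$
Since the last term is already dominated by the right hand side of (\ref{dgf.H1}), the proof reduces to bounding $\|\la(t_*)\|(\d+\d_*)$. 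If $t_*=0$ then $\la(t_*)=\la(0)=0$ and we are done, so I may assume $t_*>0$.

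The first key step is to bound $\Theta(t_*,\yd)=(t_*+a)\d_*^2$ by exploiting that $t_*$ minimizes $\Theta(\cdot,\yd)$. Choosing the reference time $t_0:=(2c_2)^{(2-q)/2}\omega\d^{q-2}$ and combining (\ref{dd1.1}) with the variational bound (\ref{eta:ee}) yields $\|Ax(t_0)-\yd\|^2\le 2\d^2$, whence
$$
(t_*+a)\d_*^2=\Theta(t_*,\yd)\le \Theta(t_0,\yd) \le \beta, \qquad \beta:=C(\omega\d^q+a\d^2),
$$
with $C$ depending only on $q$. In particular $t_*\le \beta/\d_*^2$ and $\d_*^2\le\beta/a$, and moreover $\beta\le C(\d^2+\omega(\d+\d_*)^q)$ because $\omega\d^q\le\omega(\d+\d_*)^q$.

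The second step estimates $\|\la(t_*)\|$. Discarding the nonnegative residual term in (\ref{dd1.2}) gives $\|\la(t_*)\|^2\le 8t_*\eta(t_*)+8t_*^2\d^2$, and inserting (\ref{eta:ee}) produces
$$
\|\la(t_*)\|^2\le 8c_2\,\omega^{\frac{2}{2-q}}t_*^{\frac{2(1-q)}{2-q}}+8t_*^2\d^2 .
$$
Here the exponent $\frac{2(1-q)}{2-q}$ is nonnegative for $0<q\le1$, so the upper bound $t_*\le\beta/\d_*^2$ may be substituted directly. Taking square roots, multiplying by $(\d+\d_*)$ and using $\beta\le C(\d^2+\omega(\d+\d_*)^q)$ then gives (\ref{dgf.H1}) once every ratio of the form $(\d/\d_*)^p$ that appears is absorbed by means of the elementary inequality $(\d/\d_*)^p\le 1+\d^2/\d_*^2$, valid for all $0\le p\le2$. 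I expect this bookkeeping to be the main obstacle: the contribution of $t_*^2\d^2$ is clean (after $t_*\le\beta/\d_*^2$ it equals $\sqrt8\,\beta(\d/\d_*+\d^2/\d_*^2)$, with $\d/\d_*\le 1+\d^2/\d_*^2$), but the contribution of $8c_2\omega^{2/(2-q)}t_*^{2(1-q)/(2-q)}$ requires distributing the exponent $(1-q)/(2-q)$ over the two summands of $\beta$ and then splitting according to whether $\d_*\ge\d$ or $\d_*<\d$, so that the resulting powers $(\d/\d_*)^p$, all with $p\le2$, again fall under the elementary inequality.

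Finally, (\ref{dgf.H2}) is immediate from (\ref{dgf.H1}): under Assumption \ref{dgf.ass2} every point of the trajectory satisfies $\d_*=\|Ax(t_*)-\yd\|\ge\kappa\d$, so $1+\d^2/\d_*^2\le 1+\kappa^{-2}$, and this constant can be absorbed into $C_2$, which then depends only on $a$, $q$ and $\kappa$.
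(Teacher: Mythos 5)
Your proposal is correct, and it rests on the same three ingredients as the paper's proof --- Lemma \ref{lem4.10}, the bound (\ref{eta:ee}) on $\eta(t)$, and the minimality of $\Theta(\cdot,y^\d)$ tested against a reference time of order $\omega\d^{q-2}$ --- but it organizes them differently. The paper splits on whether $t_*\le\omega/(\d+\d_*)^{2-q}$: in the small-$t_*$ case it bounds $\|\la(t_*)\|\le C\omega(\d+\d_*)^{q-1}$ directly, while in the large-$t_*$ case it applies Young's inequality to the product $\|\la(t_*)\|\,\|Ax(t_*)-y\|$ so that the resulting terms $t_*\|Ax(t_*)-y^\d\|^2$ and $t_*\d^2=(\d^2/\d_*^2)\,t_*\d_*^2$ are absorbed into $\Theta(t_*,y^\d)$, which is then controlled by minimality. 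You instead convert minimality once and for all into the explicit bounds $t_*\le\beta/\d_*^2$ and $\d_*^2\le\beta/a$ with $\beta:=\Theta(t_0,y^\d)\le C(\omega\d^q+a\d^2)$ (your computation $\|Ax(t_0)-y^\d\|^2\le 2\d^2$ at $t_0=(2c_2)^{(2-q)/2}\omega\d^{q-2}$ is right), substitute into $\|\la(t_*)\|^2\le 8c_2\,\omega^{2/(2-q)}t_*^{2(1-q)/(2-q)}+8t_*^2\d^2$ --- legitimate since the exponent is nonnegative for $0<q\le 1$ --- and then split on $\d_*\gtrless\d$ instead of on $t_*$. I verified that the bookkeeping you flagged as the main obstacle does close: in the case $\d_*<\d$ only powers $(\d/\d_*)^{2(1-q)/(2-q)}$ and $\d/\d_*$ appear, all with exponent at most $2$, so your elementary inequality suffices; in the case $\d_*\ge\d$ the first summand produces, after distributing the exponent over $\beta$, the terms $\omega\d_*^q$ and $\omega^{1/(2-q)}\d_*$, and the latter needs the weighted AM--GM step $\omega^{1/(2-q)}\d_*\le\omega\d_*^q+\d_*^2$ combined with $\d_*^2\le\beta/a$ --- this is the one ingredient your sketch compresses, though it sits squarely within your declared strategy and you listed $\d_*^2\le\beta/a$ precisely for this purpose. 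What your route buys is a single uniform estimate for $\|\la(t_*)\|$ with no case distinction on $t_*$; what the paper's route buys is cleaner exponent arithmetic and explicit constants such as $(1+4c_2)$ and $(1+2c_2)$. Your reduction of (\ref{dgf.H2}) to (\ref{dgf.H1}) via $\d_*\ge\kappa\d$ is identical to the paper's.
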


\begin{proof}
Similar to the derivation of (\ref{dgf.16}) we have
\begin{align}\label{eq:dgf22}
\E^\dag(x(t_*)) \le \|\la(t_*)\| \|A x(t_*)-y\| + \omega \|A x(t_*)-y\|^q.
\end{align}
Based on this we will derive the desired estimate by considering the following two cases.

{\it Case 1: $t_* \le \omega/(\d+\d_*)^{2-q}$}. For this case we may use (\ref{dd1.2}) and
(\ref{eta:ee}) to derive that
$$
\|\la(t_*)\|^2  \le  8 c_2 \omega^{\frac{2}{2-q}} t_*^{\frac{2(1-q)}{2-q}} + 8 t_*^2 \d^2
\le 8(1+c_2) \omega^2 (\d+\d_*)^{2(q-1)}.
$$
Combining this with (\ref{eq:dgf22}) and using $\|A x(t_*)-y\| \le \d + \d_*$ we obtain
$$
\E^\dag(x(t_*))\le \left(1+ \sqrt{8(1+c_2)}\right) \omega \left(\d + \d_*\right)^q.
$$

{\it Case 2: $t_*> \omega/(\d+\d_*)^{2-q}$.} For this case we first use (\ref{eq:dgf22})
and the Cauchy-Schwarz inequality to derive that
\begin{align*}
\E^\dag(x(t_*))
& \le \omega (\d + \d_*)^q  + \frac{t_*}{2} \|A x(t_*)-y\|^2 + \frac{\|\la(t_*)\|^2}{2 t_*} \displaybreak[0]\\
& \le \omega (\d +\d_*)^q + t_* \d^2 + t_*\|A x(t_*)-y^\d\|^2+ \frac{\|\la(t_*)\|^2}{2 t_*}.
\end{align*}
With the help of (\ref{dd1.2}), (\ref{eta:ee}), the definition of $\Theta(t_*, y^\d)$
and  $t_*> \omega/(\d+\d_*)^{2-q}$ we then obtain
\begin{align}\label{eq:dgf20}
\E^\dag(x(t_*))
& \le \omega (\d +\d_*)^q + 4 c_2 \omega^{\frac{2}{2-q}} t_*^{-\frac{q}{2-q}}
+ \Theta(t_*, y^\d) + 5 t_* \d^2 \nonumber \displaybreak[0]\\
& \le (1+4c_2) \omega (\d + \d_*)^q + \left(1+\frac{5\d^2}{\d_*^2}\right) \Theta(t_*, y^\d).
\end{align}
By the definition of $t_*$ and the equation (\ref{dd1.1}) we have
\begin{align*}
\Theta(t_*, y^\d) \le \Theta(t, y^\d) \le 2c_2 \omega^{\frac{2}{2-q}} t^{-\frac{q}{2-q}} + t \d^2
+ a \left( 2 c_2 \omega^{\frac{2}{2-q}} t^{-\frac{2}{2-q}} +\d^2 \right)
\end{align*}
for all $t>0$. We now choose $t = \omega\d^{q-2}$. Then
\begin{align}\label{dgf.H5}
\Theta(t_*, y^\d)
\le \left(1+ 2c_2 \right) \omega \d^q + a(1+2c_2) \d^2.
\end{align}
Combining the above estimates on $\Theta(t_*, y^\d)$ with (\ref{eq:dgf20}) yields
\begin{align*}
\E^\dag(x(t_*))
& \le (1+4c_2) \omega (\d+\d_*)^q + (1+2c_2)\left(1+\frac{5\d^2}{\d_*^2}\right)
\left(a \d^2 + \omega \d^q\right).
\end{align*}
The shows (\ref{dgf.H1}).

Since Assumption \ref{dgf.ass2} implies $\d_*:=\|A x(t_*)-y^\d\| \ge \kappa \d$, (\ref{dgf.H2}) follows from (\ref{dgf.H1}) immediately.
\end{proof}

\begin{remark}
{\rm Under Assumption \ref{dgf.ass2} we can derive an estimate on $\E^\dag(x(t_*))$ independent of $\d_*$. 
Indeed, by definition we have $\d_*^2 = \frac{\Theta(t_*, y^\d)}{t_*+a} \le \frac{1}{a} \Theta(t_*, y^\d)$. 
By using (\ref{dgf.H5}) we then obtain $\d_*^2 = O(\d^q)$ which together with (\ref{dgf.H2}) gives 
$\E^\dag(x(t_*)) = O(\d^{q^2/2})$. In deriving this rate, we used $t_*\ge 0$ which is rather conservative.
The actual value of $t_*$ can be much larger as $\d\to 0$ and therefore better rate than $O(\d^{q^2/2})$
can be expected. 
}
\end{remark}

\section{\bf Numerical results}

In this section we present some numerical results to illustrate the numerical behavior of 
the dual gradient flow (\ref{asym0}) which is an autonomous ordinary differential equation. 
We discretize (\ref{asym0}) by the 4th-order Runge-Kutta method which takes the form (\cite{db02})
\begin{equation}\label{RK4}
\begin{aligned}
&\omega_{n,i}=\la_{n}+\Delta t \sum_{j=1}^{i-1} \gamma_{i,j}(y^\d - A k_{n,j}), \quad
  k_{n,i} = \nabla \R^*(A^*\omega_{n,i}), \quad  i=1,\cdots,4, \\
&\la_{n+1}=\la_{n}+\Delta t \sum_{i=1}^4 b_i(y^\d - A k_{n,i}), \quad
x_{n+1} = \nabla \R^*(A^*\la_{n+1})
\end{aligned}
\end{equation}
with suitable step size $\Delta t>0$, where $\Gamma:=(\gamma_{i,j})\in \mathbb{R}^{4\times 4}$ 
and $b := (b_i) \in \mathbb{R}^4$ are given by 
$$
\Gamma = \begin{pmatrix}
0   &  0  &  0   &  0  \\
1/2 &  0  &  0   &  0  \\
0   & 1/2 &  0   &  0  \\
0   &  0  &  1   &  0 \\
\end{pmatrix}
\quad \mbox{ and } \quad 
b = \begin{pmatrix}
1/6 \\
1/3 \\
1/3 \\
1/6
\end{pmatrix}.
$$
The implementation of (\ref{RK4}) requires determining $x := \nabla \R^*(A^* \la)$ for any $\la \in Y$, 
which, by virtue of (\ref{FL1}), is equivalent to solving the convex minimization problem
$$
x = \arg\min_{z\in X} \left\{ \R(z) - \l A^* \la, z\r \right\}.
$$
For many important choices of $\R$, $x := \nabla \R^*(A^* \la)$ can be given by an explicit formula; even if $x$ does 
not have an explicit formula, it can be determined by efficient algorithms.

\begin{example}\label{ex1}
{\rm 
Consider the first kind Fredholm integral equation of the form
\begin{equation}\label{Ax}
(Ax)(s) = \int_0^1 k(s,s')x(s')ds' = y(s) \quad  \mbox{ on }  [0,1],
\end{equation}
where the kernel $k$ is a continuous function on $[0,1]\times [0, 1]$. Clearly $A$ maps $L^1[0,1]$ into $L^2[0,1]$.
We assume the sought solution $x^\dag$ is a probability density function, i.e. $x^\dag \ge 0$ a.e. on $[0,1]$ 
and $\int_0^1 x^\dag(s) ds =1$. To find such a solution, we consider the dual gradient flow (\ref{asym0}) with 
\begin{align*} 
\R(x) : = f(x) + \iota_\Delta (x),
\end{align*}
where $\iota_\Delta$ denotes the indicator function of the closed convex set
\begin{align*}
\Delta := \left\{x\in L^1[0,1]: x\ge 0 \mbox{ a.e. on } [0,1] \mbox{ and } \int_0^1 x(s) ds =1\right\}
\end{align*}
in $L^1[0,1]$ and $f$ denotes the negative of the Boltzmann-Shannon entropy, i.e.
$$
f(x) := \left\{\begin{array}{lll}
\int_0^1 x(s) \log x(s) ds & \mbox{ if } x \in L_+^1[0,1] \mbox{ and } x \log x \in L^1[0,1], \\[1.2ex]
\infty & \mbox{ otherwise},
\end{array}\right.
$$
where $L_+^1[0,1]:= \{x \in L^1[0,1]: x \ge 0 \mbox{  a.e. on } [0,1]\}$. According to \cite{AH1991,BL1991,E1993,EL1993,Jin2022}, 
$\R$ satisfies Assumption \ref{dgm.ass1}. By the Karush-Kuhn-Tucker theory, for any $\la\in L^2[0,1]$,  
$x:=\nabla \R^*(A^*\la)$ is given by $x := e^{A^*\la}/\int_0^1 e^{(A^*\la)(s)} ds$.

For numerical simulations we consider (\ref{Ax}) with $k(s,s') = 4e^{-(s-s')^2/0.0064}$
and assume the sought solution is 
$$
x^\dag (s) = c\left( e^{-60 (s-0.3)^2}+0.3e^{-40(s-0.7)^2}\right),
$$
where the constant $c>0$ is chosen to ensure $\int_0^1 x(s) ds = 1$ so that $x^\dag$ is a probability 
density function.
In the numerical computation we divide $[0, 1]$ into $m=800$ subintervals of equal length 
and approximate integrals by the trapezoidal rule. We add random Gaussian noise to the exact data 
$y:=Ax^\dag$ to obtain the noisy data $y^\d$ whose noise level is $\d:=\|y-y^\d\|_{L^2}$. 
With $y^\d$ we reconstruct the sought solution $x^\dag$ by using the dual gradient flow 
(\ref{asym0}) with $\la(0)=0$ which is solved approximately by the 4th-order Runge-Kutta method 
as described in (\ref{RK4})
with $\Delta t = 0.4$. We consider the choice of the proper time $t$ by the discrepancy principle, i.e. 
Rule \ref{Rule:DP} and the heuristic discrepancy principle, i.e. Rule \ref{dgf.HR}.

\begin{table}[http] 
\caption{Numerical results for Example \ref{ex1} by the dual gradient flow (\ref{asym0}) under 
Rule \ref{Rule:DP} and Rule \ref{dgf.HR}.}\label{table1}
\begin{center}
\begin{tabular}{cccccccc}
\hline
%&\multicolumn{6}{c}{Discrepancy principle with various $\tau$} & \multicolumn{3}{c}{HR rule} \\
%\cline{2-7}
&\multicolumn{2}{c} {Rule \ref{Rule:DP} with $\tau=1.1$ }& \multicolumn{2}{c}{Rule \ref{Rule:DP} with $\tau=6.0$} 
& \multicolumn{2}{c}{Rule \ref{dgf.HR} with $a = 0.1$}
 \\
 \cline{2-7}
$\d $  & $t$ & $RE$ & $t$ & $RE$ &  $t$ & $RE$ \\
\hline
1e-1 & 13.6    & 1.1514e-1 & 0.8    & 6.6167e-1  & 7.2    & 1.9012e-1  \\
1e-2 & 132.8   & 3.2177e-2 & 10.4   & 1.3664e-1  & 63.2   & 3.8761e-2  \\
1e-3 & 1810.4  & 1.2145e-2 & 106    & 3.1348e-2  & 680.8  & 1.4750e-2 \\
1e-4 & 26532.4 & 5.0245e-3 & 1376.4 & 1.2854e-2  & 8001.2 & 8.2320e-3  \\
\hline
\end{tabular}
\end{center}
\end{table}

To demonstrate the numerical performance of (\ref{asym0}) quantitatively, we perform the numerical 
simulations using noisy data with various noise levels. In Table \ref{table1} we report the time 
$t$ determined by Rule \ref{Rule:DP} with $\tau = 1.1$ and $\tau = 6.0$ and by Rule \ref{dgf.HR} with 
$a = 0.1$, we also report the corresponding relative errors $RE:=\|x(t)-x^\dag\|_{L^1}/\|x^\dag\|_{L^1}$. 
The values $\tau = 1.1$ and $\tau = 6.0$ used in Rule \ref{Rule:DP} correspond to the proper estimation and 
overestimation of noise levels respectively. 
The results in Table \ref{table1} show that Rule \ref{Rule:DP} with a proper estimate of the noise level 
can produce nice reconstruction results; in case the noise level is overestimated, less accurate reconstruction 
results can be produced. Although Rule \ref{dgf.HR} does not utilize any information on noise level, it 
can produce satisfactory reconstruction results. 

\begin{figure}[htpb]
\centering
\includegraphics[width = 0.32\textwidth]{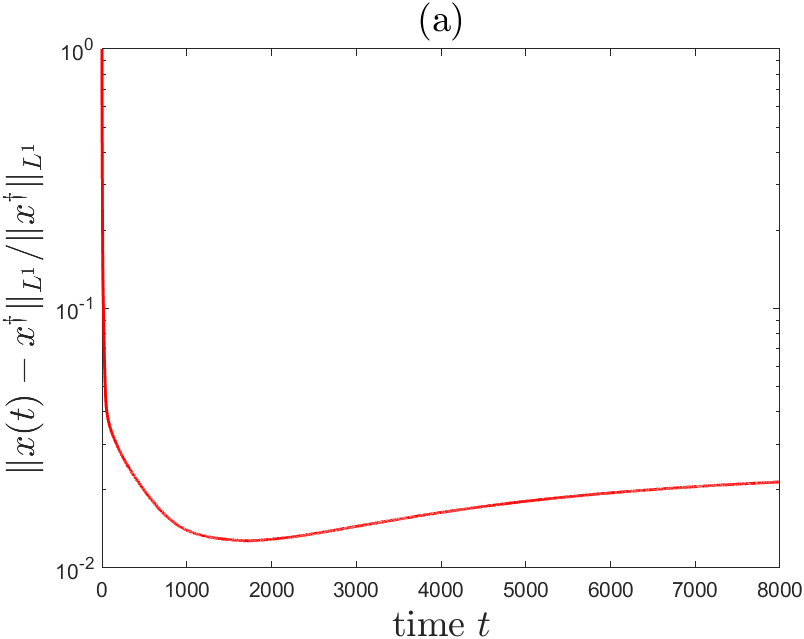}
\includegraphics[width = 0.32\textwidth]{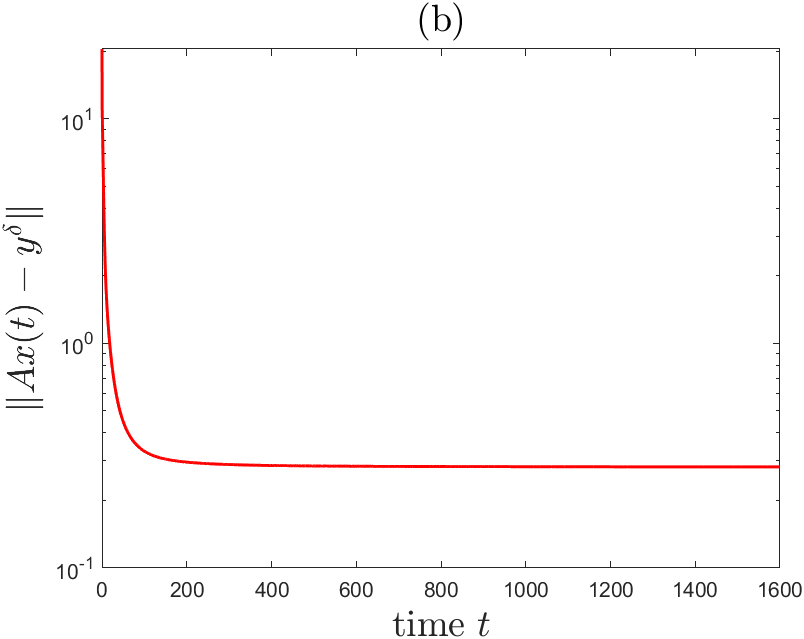}
\includegraphics[width = 0.32\textwidth]{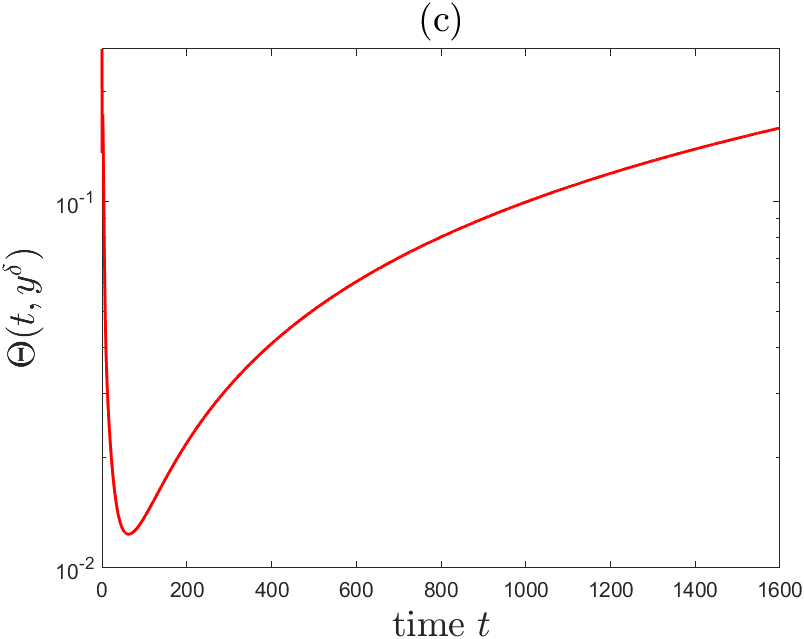}
\vskip 0.15cm
\includegraphics[width = 0.32\textwidth]{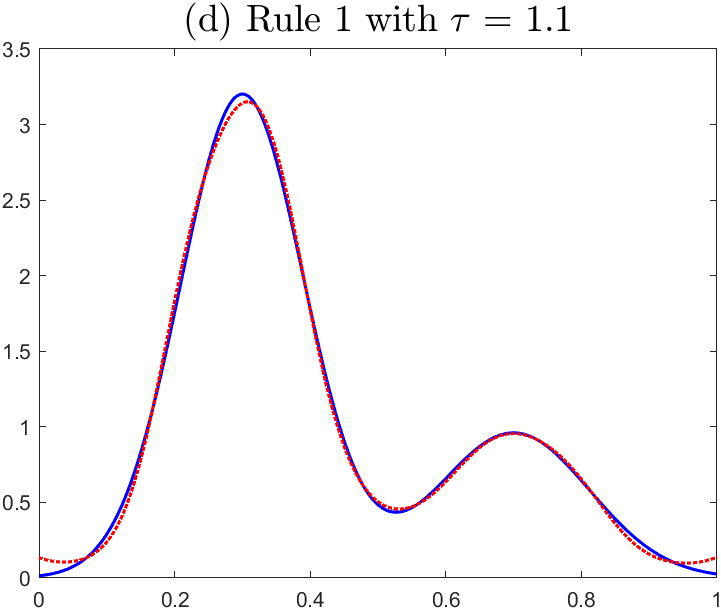}
\includegraphics[width = 0.32\textwidth]{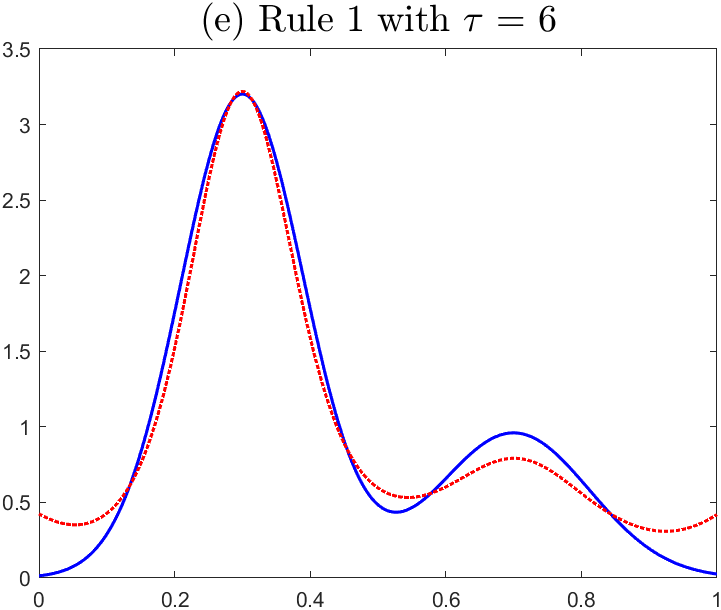}
\includegraphics[width = 0.32\textwidth]{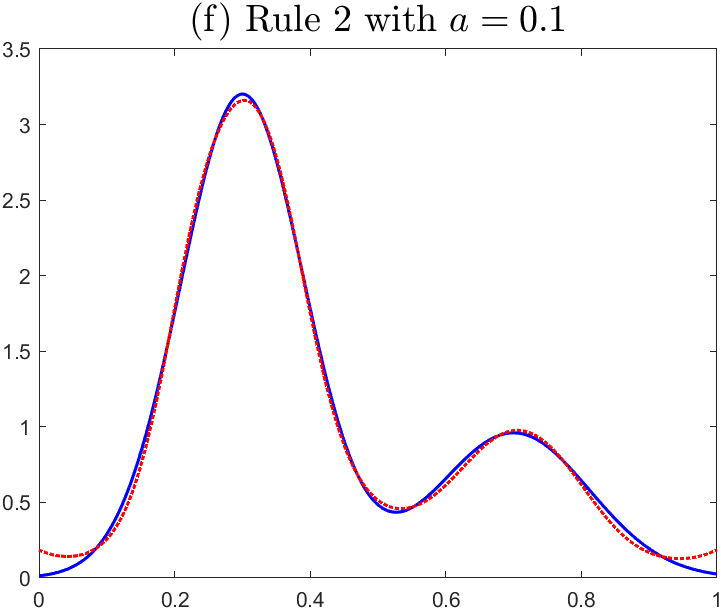}
\caption{The reconstructed results for Example \ref{ex1} using noisy data with noise level $\d = 0.01$. }
\label{fig1:pdf}
\end{figure}

In order to visualize the performance of (\ref{asym0}), in Figure \ref{fig1:pdf} we present the
reconstruction results using a noisy data $y^\d$ with the noise level $\d = 0.01$. Figure \ref{fig1:pdf} (a) 
plots the relative error versus the time which indicates the dual gradient flow (\ref{asym0}) possesses 
the semi-convergence property. Figure \ref{fig1:pdf} plots the residual $\|A x(t) - y^\d\|_{L^2}$ versus the time 
$t$ which demonstrates that Assumption \ref{dgf.ass2} holds with sufficient confidence if the data is corrupted by 
random noise. Figure \ref{fig1:pdf} (c) plots $\Theta(t, y^\d)$ versus $t$ which clearly shows that $t\to \Theta(t, y^\d)$ 
achieves its minimum at a finite number $t>0$. Figure \ref{fig1:pdf} (d)--(f) present the respective 
reconstruction results $x(t)$ with $t$ chosen by Rule \ref{Rule:DP} with $\tau = 1.1$ and $\tau = 6.0$ and by 
Rule \ref{dgf.HR} with $a = 0.1$. 
}
\end{example}

\begin{example}\label{ex2}
{\rm
We next consider the computed tomography which consists in determining the density of cross sections 
of a human body by measuring the attenuation of X-rays as they propagate through the biological tissues.
In the numerical simulations we consider  test problems that model the standard 2D parallel-beam tomography.  
We use the full angle model with 45 projection angles evenly distributed between 1 and 180 degrees, 
with 367 lines per projection. Assuming the sought image is discretized on a $256 \times 256$ pixel grid, 
we may use the function \texttt{paralleltomo} in the MATLAB package AIR TOOLS \cite{Hansen2012} to discretize 
the problem. It leads to an ill-conditioned linear algebraic system $Ax = y$,  
where $A$ is a sparse matrix of size $M\times N$, where $M=16515$ and $N=66536$.

Let the true image be the modified Shepp-Logan phantom of size $256\times 256$ generated by MATLAB.
Let $x^{\dag }$ denote the vector formed by stacking all the columns of the true image and let $y = A x^\dag$ 
be the true data. We add Gaussian noise on $y$ to generate a noisy data $y^\d$ with relative noise level
$\d_{rel}=\|y^\d-y\|_2/\|y\|_2$ so that the noise level is $\d = \d_{rel} \|y\|_2$. We will use $y^\d$ 
to reconstruct $x^\dag$. In order to capture the feature of the sought image, we take
\begin{align*} 
\R(x) = \frac{1}{2\beta} \|x\|_2^2 + |x|_{TV}
 \end{align*}
with a constant $\beta>0$, where $|x|_{TV}$ denotes the total variation of $x$.
This $\R$ is strongly convex with $c_0 = \frac{1}{2\beta}$.
The determination of $x= \nabla \R^*(A^* \la)$ for any given $\la$ is equivalent to solving the 
total variation denoising problem
\begin{align*} 
x = {\rm arg}\min_{z}\left\{\frac{1}{2\beta}\|z-\beta A^*\la\|_2^2 + |z|_{TV}\right\} 
\end{align*}
which can be solved efficiently by the primal dual hybrid gradient method \cite{BR2012,ZC2008};
we use $\beta=1$ in our computation. With a noisy data $y^\d$ we reconstruct the true image by the dual 
gradient method (\ref{asym0}) with $\la(0) = 0$ which is solved approximately by the 4th Runge-Kutta method (\ref{RK4}) 
with constant step-size $\Delta t = 0.4\times 10^{-3}$.

\begin{table}[http] 
\caption{Numerical results for Example \ref{ex2} by the dual gradient flow (\ref{asym0}) under 
Rule \ref{Rule:DP} and Rule \ref{dgf.HR}.} \label{table2}
\begin{center}
\begin{tabular}{cccccccc}
\hline
&\multicolumn{2}{c} {Rule \ref{Rule:DP} with $\tau=1.05$}& \multicolumn{2}{c}{Rule \ref{Rule:DP} with $\tau=3.0$} 
& \multicolumn{2}{c}{Rule \ref{dgf.HR} with $a = 0.1$}\\
\cline{2-7}
    $\delta_{rel}$  & $t$ & $RE$ & $t$ & $RE$ &  $t$ & $RE$ \\
\hline
5e-2 & 0.0260 & 1.9677e-1  & 0.0144 & 3.0193e-1 & 0.0324 & 1.7331e-1  \\
1e-2 & 0.1420 & 6.5240e-2  & 0.0212 & 2.2035e-1 & 0.0992 & 8.1327e-2  \\
5e-3 & 0.2640 & 3.6406e-2  & 0.0380 & 1.4581e-1 & 0.2080 & 4.5257e-2 \\
1e-3 & 1.0136 & 7.3525e-3  & 0.2160 & 4.1178e-2 & 0.6392 & 1.1537e-2  \\
5e-4 & 1.7620 & 3.6169e-3  & 0.5028 & 2.1548e-2 & 1.5904 & 6.0784e-3  \\
\hline
\end{tabular}
\end{center}
\end{table}

We demonstrate in Table \ref{table2} the performance of (\ref{asym0}) quantitatively by performing the numerical 
simulations using noisy data with various relative noise levels. We report the value of $t$ determined by Rule \ref{Rule:DP} 
with $\tau = 1.05$ (proper estimation case) and $\tau = 3.0$ (overestimation case) and by Rule \ref{dgf.HR} with 
$a = 0.1$, we also report the corresponding relative errors $RE:=\|x(t)-x^\dag\|_2/\|x^\dag\|_2$. 
The results show that Rule \ref{Rule:DP} with a proper estimate of the noise level 
can produce satisfactory reconstruction results; overestimation of noise level can deteriorate the reconstruction 
accuracy.  Rule \ref{dgf.HR} can produce nice reconstruction results even if it does not rely on the information 
of noise level. 

\begin{figure}[htpb]
\centering
\includegraphics[width = 0.32\textwidth]{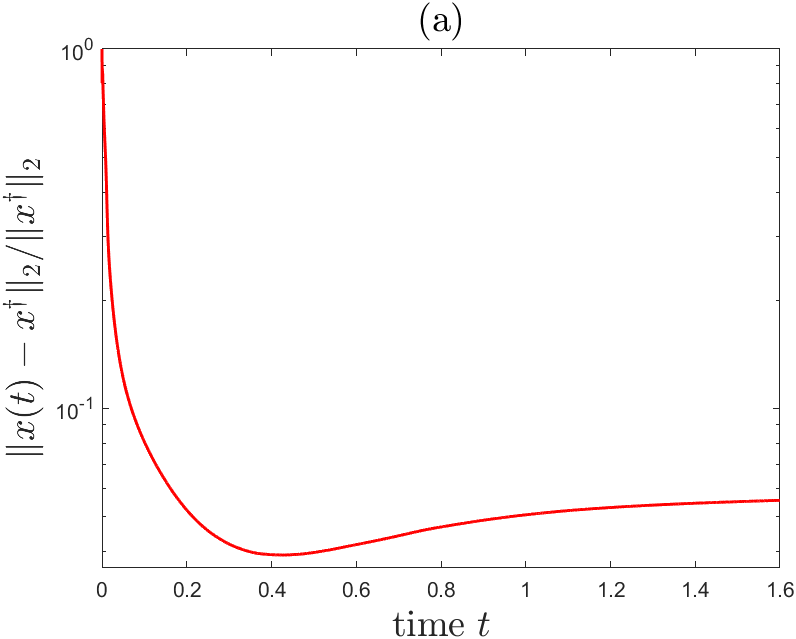}
\includegraphics[width = 0.32\textwidth]{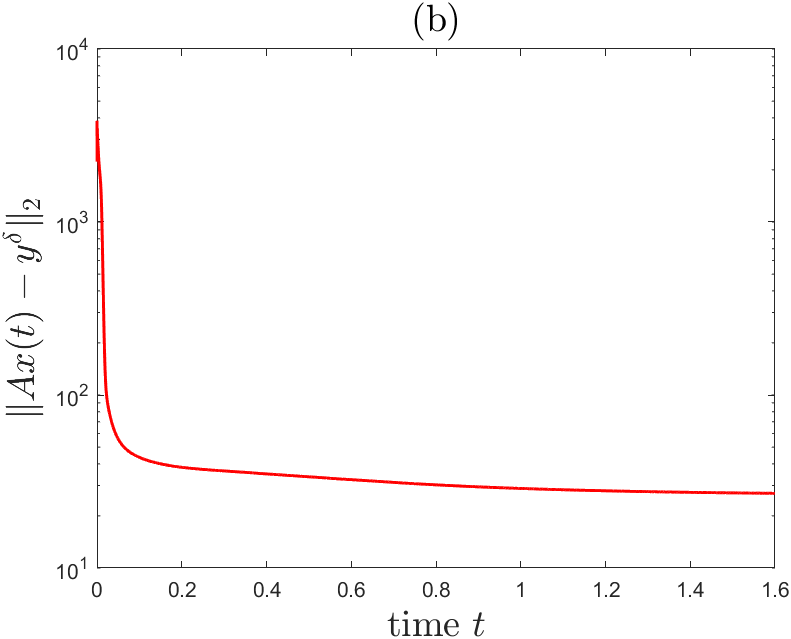}
\includegraphics[width = 0.32\textwidth]{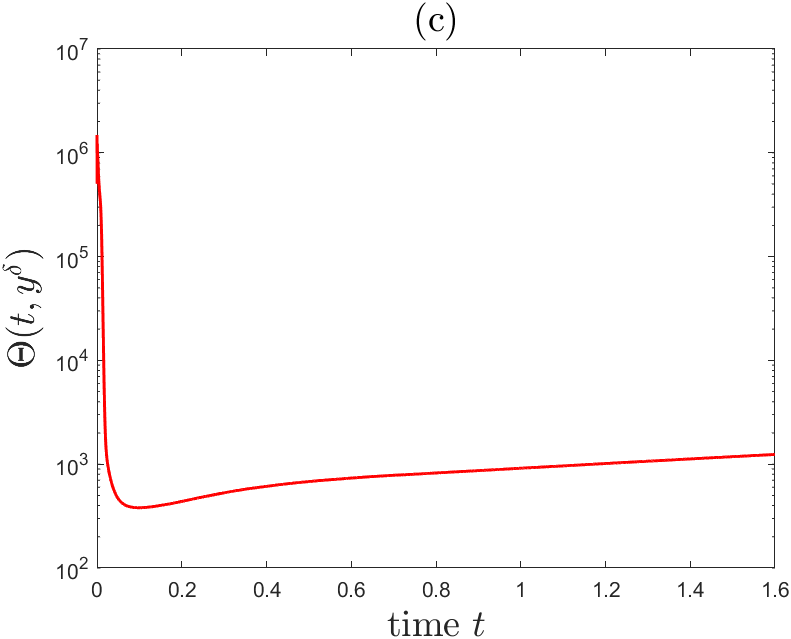}
\vskip 0.15cm
\includegraphics[width = 0.23\textwidth]{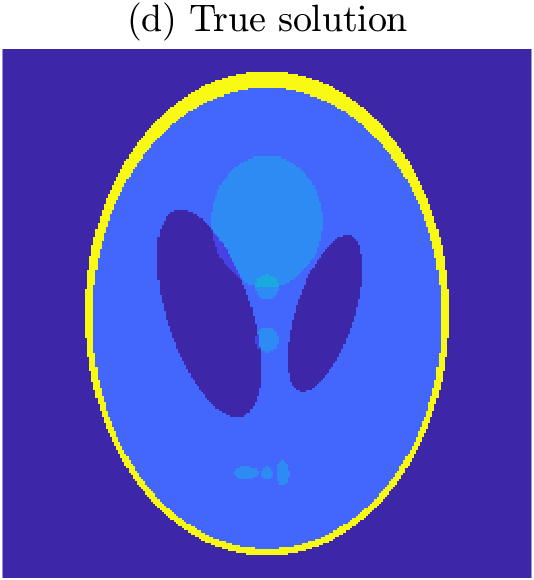}
\includegraphics[width = 0.23\textwidth]{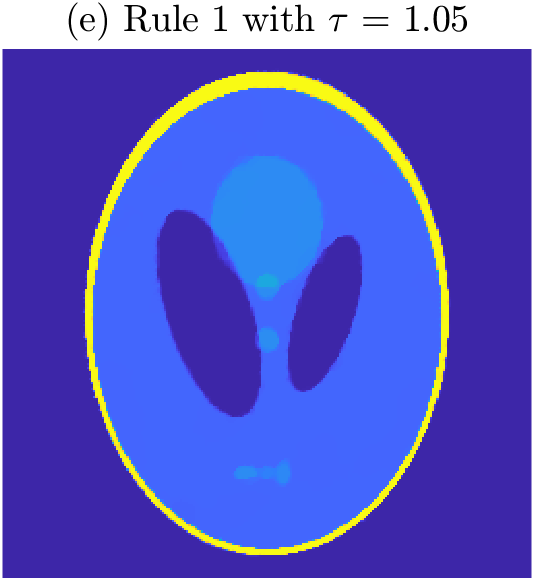}
\includegraphics[width = 0.23\textwidth]{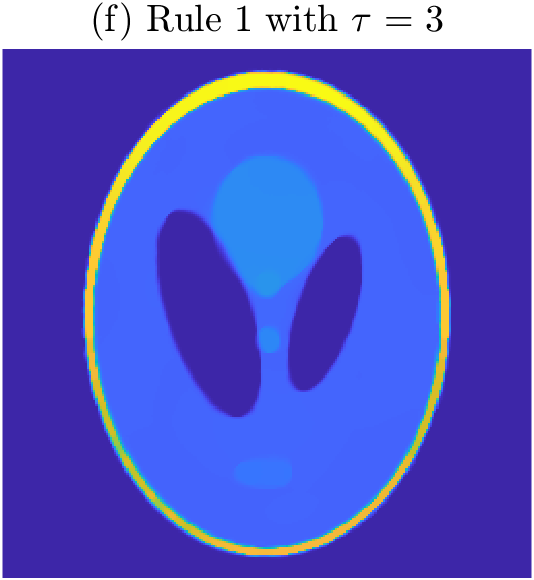}
\includegraphics[width = 0.23\textwidth]{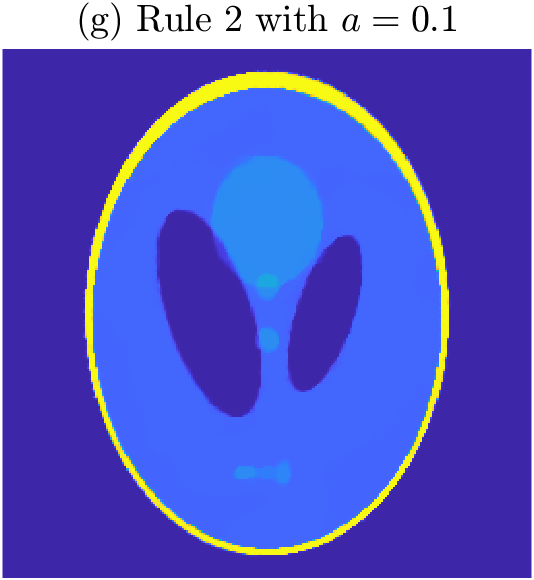}
\caption{The reconstructed results for Example \ref{ex2} using noisy data with relative noise level $\d_{rel} = 0.01$.}
\label{fig2:CT}
\end{figure}

To visualize the performance of (\ref{asym0}), in Figure \ref{fig2:CT} we present the
reconstruction results using a noisy data $y^\d$ with the relative noise level $\d_{rel} = 0.01$. 
Figure \ref{fig2:CT} (a)-(c) plot the relative errors, the residual $\|A x(t) - y^\d\|_2$ and 
the value of $\Theta(t, y^\d)$ versus the time $t$. These results demonstrate the semi-convergence phenomenon 
of (\ref{asym0}), the validity of Assumption \ref{dgf.ass2} and well definedness of Rule \ref{dgf.HR}. 
Figure \ref{fig2:CT} (d)-(g) plot the true image, the reconstructed images by Rule \ref{Rule:DP} with 
$\tau = 1.05$ and $\tau = 3.0$, and the reconstruction by Rule \ref{dgf.HR} with $a = 0.1$. 
}
\end{example}

\section*{\bf Acknowledgements}
The work of Q Jin is partially supported by the Future Fellowship of the Australian Research Council (FT170100231) 
and the work of W Wang is supported by the National Natural Science Foundation of China (No. 12071184).

\appendix
\section{}

Consider (\ref{R_min}), we want to show that by adding a small multiple of a strongly convex function 
to $\R$ does not affect the solution too much. Actually, we can prove the following result which is an 
extension of \cite[Theorem 4.4]{COS2009} that proves a special instance related to sparse recovery in 
finite-dimensional Hilbert spaces.

\begin{proposition}\label{prop:A.1}
Let $X$ and $Y$ be Banach space, $A: X \to Y$ a bounded linear operator and $y \in \emph{Ran}(A)$, 
the range of $A$. Let $\R: X \to (-\infty, \infty]$ be a proper, lower semi-continuous, convex 
function with $S\cap \emph{dom}(\R) \ne \emptyset$, where  
$$
S := \arg\min\left\{\R(x): x\in X \mbox{ and } A x = y\right\}.
$$
Assume that $\Psi: X \to (-\infty, \infty]$ is a proper, lower semi-continuous, strongly convex function
with $\emph{dom}(\R) \subset \emph{dom}(\Psi)$ such that every sublevel set of $\R+ \Psi$ is weakly compact. 
For any $\a>0$ define
$$
x_\a := \arg\min\left\{\R(x) + \a \Psi(x): x\in X \mbox{ and } A x = y\right\}.
$$
Then $x_\a \to x^*$ and $\R(x_\a) \to \R(x^*)$ as $\a \to 0$, where $x^*\in S$ is such
that $\Psi(x^*) \le \Psi(x)$ for all $x \in S$.
\end{proposition}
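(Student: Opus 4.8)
The plan is to run the standard variational argument for such regularized selection results: existence and uniqueness supplied by strong convexity and weak compactness, a single comparison inequality to extract the limit, weak lower semicontinuity to identify it, and strong convexity once more to upgrade to norm convergence.

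First I would settle well-posedness. Since $\Psi$ is strongly convex and $\a>0$, the objective $\R+\a\Psi$ is strongly convex, proper, and lower semicontinuous, and its sublevel sets are weakly compact; minimizing over the weakly closed feasible set $\{x:Ax=y\}$ therefore yields a unique $x_\a$. On the set $S$ the function $\R$ is constant, so minimizing $\Psi$ over $S$ coincides with minimizing $\R+\Psi$ over $S$; the weakly compact sublevel sets then guarantee that $\Psi$ attains its minimum over $S$, and strong convexity makes the minimizer $x^*$ unique.

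The engine is the comparison inequality. As $x^*$ and $x_\a$ are both feasible and $x_\a$ minimizes $\R+\a\Psi$,
\[
\R(x_\a)+\a\Psi(x_\a)\le \R(x^*)+\a\Psi(x^*),
\]
while $x^*\in S$ gives $\R(x^*)\le\R(x_\a)$. Subtracting yields $\Psi(x_\a)\le\Psi(x^*)$ and $0\le \R(x_\a)-\R(x^*)\le \a\bigl(\Psi(x^*)-\Psi(x_\a)\bigr)$. Since a strongly convex function is bounded below, these bounds keep $\R(x_\a)+\Psi(x_\a)$ bounded for $\a\le 1$, so $\{x_\a\}$ lies in a weakly compact sublevel set of $\R+\Psi$. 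For any sequence $\a_k\to 0$ I would then extract $x_{\a_k}\rightharpoonup\bar x$. Weak--weak continuity of $A$ gives $A\bar x=y$; with $\Psi(x_{\a_k})$ trapped between $\inf\Psi$ and $\Psi(x^*)$, the right side of the comparison tends to zero, so $\limsup_k\R(x_{\a_k})\le\R(x^*)$, and weak lower semicontinuity of $\R$ forces $\R(\bar x)\le\R(x^*)$, whence $\bar x\in S$ and in fact $\R(x_{\a_k})\to\R(x^*)$. Weak lower semicontinuity of $\Psi$ together with $\Psi(x_{\a_k})\le\Psi(x^*)$ gives $\Psi(\bar x)\le\Psi(x^*)$; minimality of $x^*$ over $S$ then yields $\Psi(\bar x)=\Psi(x^*)$ and $\Psi(x_{\a_k})\to\Psi(x^*)$, and uniqueness identifies $\bar x=x^*$.

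The step I expect to be most delicate is upgrading weak convergence to strong convergence, where the full force of strong convexity is needed. Writing $c>0$ for the strong convexity constant of $\Psi$ and applying its defining inequality with $\gamma=\tfrac12$ to $x_{\a_k}$ and $x^*$,
\[
\Psi\Bigl(\frac{x_{\a_k}+x^*}{2}\Bigr)+\frac{c}{4}\|x_{\a_k}-x^*\|^2\le \frac12\Psi(x_{\a_k})+\frac12\Psi(x^*).
\]
Since the midpoints converge weakly to $x^*$, weak lower semicontinuity gives $\liminf_k\Psi\bigl(\tfrac{x_{\a_k}+x^*}{2}\bigr)\ge\Psi(x^*)$; taking $\limsup$ above and using $\Psi(x_{\a_k})\to\Psi(x^*)$ forces $\|x_{\a_k}-x^*\|\to 0$. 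Finally, because the limit $x^*$ does not depend on the chosen subsequence, a routine subsequence--subsequence argument promotes these to the full limits $x_\a\to x^*$ and $\R(x_\a)\to\R(x^*)$ as $\a\to 0$.
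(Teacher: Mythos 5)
Your proposal is correct and follows essentially the same route as the paper: the same comparison inequalities $\R(x_\a)+\a\Psi(x_\a)\le\R(x^*)+\a\Psi(x^*)$ and $\R(x^*)\le\R(x_\a)$, the same weak-compactness extraction and identification of the weak limit as $x^*$, and the same subsequence--subsequence conclusion. The only deviation is the final upgrade from weak to strong convergence, where the paper simply invokes the Kadec property of strongly convex functions from \cite{JZ2014}, while you prove that property inline via the midpoint strong-convexity inequality and weak lower semicontinuity --- a correct, self-contained substitute for the citation.
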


\begin{proof}
Since $S$ is convex and $\Psi$ is strongly convex, $x_\a$ and $x^*$ are uniquely defined. By 
the definition of $x_\a$ and $x^*$ we have
\begin{align}\label{A.1}
\R(x_\a) + \a \Psi(x_\a) \le \R(x^*) + \a \Psi(x^*) \tag{A.1}
\end{align}
and
\begin{align}\label{A.2}
\R(x^*) \le \R(x_\a). \tag{A.2}
\end{align}
Combining these two equations gives
\begin{align}\label{A.3}
\Psi(x_\a) \le \Psi(x^*) < \infty. \tag{A.3}
\end{align}
Thus, it follows from (\ref{A.1}) that $\limsup_{\a\to 0} \R(x_\a) \le \R(x^*)$ which
together with (\ref{A.2}) shows
\begin{align}\label{A.4}
\lim_{\a\to 0} \R(x_\a) = \R(x^*). \tag{A.4}
\end{align}

Next we show $x_\a \to x^*$ as $\a\to 0$. Since every sublevel set of $\R+\Psi$ is weakly compact, 
for any sequence $\{\a_k\}$ with $\a_k \to 0$, by taking a subsequence if necessary, we may 
use (\ref{A.3}) and (\ref{A.4}) to conclude $x_{\a_k} \rightharpoonup \hat x$ as $k\to \infty$
for some element $\hat x \in X$. Since $A x_{\a_k} = y$ and $A$ is bounded, letting $k \to \infty$ gives
$A \hat x = y$. By the weak lower semi-continuity of $\R$ and (\ref{A.4}) we also have
$$
\R(\hat x) \le \liminf_{k\to \infty} \R(x_{\a_k}) = \R(x^*).
$$
Since $x^* \in S$, we thus have $\hat x \in S$. Consequently $\Psi(x^*) \le \Psi(\hat x)$ by the definition
of $x^*$. By (\ref{A.3}) and the weak lower semi-continuity of $\Psi$ we also have
\begin{align}\label{A.5}
\Psi(\hat x) \le \liminf_{k\to \infty} \Psi(x_{\a_k}) \le \limsup_{k\to \infty} \Psi(x_{\a_k}) \le \Psi(x^*).
\tag{A.5}
\end{align}
Thus $\hat x, x^* \in S$ and $\Psi(\hat x) =\Psi(x^*)$. By uniqueness we must have $\hat x = x^*$. 
Consequently $x_{\a_k}\rightharpoonup x^*$ and, by (\ref{A.5}), $\Psi(x_{\a_k}) \to \Psi(x^*)$ 
as $k \to \infty$. Since $\Psi$ is strongly convex, it admits the Kadec property, 
see \cite[Lemma 2.1]{JZ2014}, which implies $x_{\a_k} \to x^*$ as $k \to \infty$. Since, for any sequence 
$\{\a_k\}$ with $\a_k \to 0$, $\{x_{\a_k}\}$ has a subsequence converging strongly to $x^*$ 
as $k\to \infty$, we can conclude $x_\a\to x^*$ as $\a\to 0$. 
\end{proof}

\end{document}